\documentclass[pdflatex,sn-mathphys-ay]{sn-jnl}

\usepackage{graphicx}
\usepackage{amsmath,amssymb,amsfonts}
\usepackage{mathrsfs}
\usepackage{tikz}
\usetikzlibrary{decorations.pathreplacing,automata,calc,positioning,arrows,shapes.geometric}

\theoremstyle{thmstyleone}
\newtheorem{theorem}{Theorem}
\newtheorem{corollary}[theorem]{Corollary}
\newtheorem{lemma}[theorem]{Lemma}
\newtheorem{proposition}[theorem]{Proposition}

\theoremstyle{thmstyletwo}

\theoremstyle{thmstylethree}

\newtheorem{question}{Question}

\begin{document}

\title[Connected Coloring Games on Corona Products]{Connected Coloring Games on Corona Products of Graphs}

\author*[1,2]{\fnm{Da\v sa} \sur{Mesari\v c \v Stesl}}\email{Dasa.Stesl@fri.uni-lj.si}

\affil*[1]{\orgdiv{Faculty of Computer and Information Science}, \orgname{University of Ljubljana}, \orgaddress{\street{Ve\v{c}na pot 113}, \postcode{1000}, \city{Ljubljana}, \country{Slovenia}}}

\affil[2]{\orgdiv{Faculty of Natural Sciences and Mathematics}, \orgname{University of Maribor}, \orgaddress{\street{Koro\v{s}ka cesta 160}, \postcode{2000}, \city{Maribor}, \country{Slovenia}}}

\abstract{We study the connected coloring game on corona products of graphs. For connected graphs $G$ and $H$, we prove the sharp upper bound $\chi_{cg}(G\circ H)\leq\max\{\Delta(G),\Delta(H)+1\}+1$ and establish a further bound using the Bob-start ordinary coloring game on the second factor. We show that the difference between the Alice-start and Bob-start connected game chromatic numbers can be arbitrarily large in either direction. We further prove that a connected subgraph may have arbitrarily larger connected game chromatic number than its ambient connected graph, and that $\chi_{cg}(G\circ H)$ cannot be bounded in terms of $\chi_{cg}(G)$ and $\chi_{cg}(H)$ alone, even when both factors have connected game chromatic number $2$. We also obtain corona products for which the difference $\chi_g-\chi_{cg}$ is arbitrarily large and construct infinitely many connected graphs with $\chi_g<\chi_{cg}$. Finally, for corona products of trees, we prove the universal upper bound $5$, characterize those attaining the value $3$, give structural sufficient conditions for the value $4$, and exhibit a broad family attaining the value $5$.}

\keywords{coloring game, connected coloring game, game chromatic number, corona product, trees}

\pacs[MSC Classification]{05C57, 05C15}

\maketitle

\section{Introduction}
\label{sec:introduction}

The graph coloring game is a two-player game in which the first player's goal is to produce a proper vertex coloring of the entire graph, while the second player's goal is to prevent this from happening. The game was introduced by Steven Brams in the context of map coloring; it was presented by Gardner in 1981 and later discussed in a graph-theoretic framework by Bodlaender \citep{gardner-1981,bodlaender-1991}. Given a graph $G$ and a fixed set of colors, two players, named Alice and Bob, alternately take turns coloring previously uncolored vertices, always maintaining a proper coloring. Alice wins if the entire graph is eventually colored, while Bob wins if it becomes impossible to color some uncolored vertex. The least number of colors for which Alice has a winning strategy is the \emph{game chromatic number}, denoted by $\chi_g(G)$.

The game chromatic number has been extensively studied for many classes of graphs. A result of Faigle, Kern, Kierstead and Trotter states that every forest $F$ satisfies $\chi_g(F)\leq4$ \citep{faigle}. Further work considers exact values and bounds on the game chromatic numbers of planar and outerplanar graphs, partial $k$-trees, and other sparse graph classes \citep{kierstead-trotter-1994,dinski-zhu-1999,guan-zhu-1999,zhu-planar-1999,zhu-ppt-2000,kierstead-2000,zhu-activation-2008}. \cite{costa-2020} showed that the game is also computationally difficult, more precisely, that the corresponding decision problem is PSPACE-complete. For the main results concerning the classical map-coloring game, as well as for fundamental techniques in this field, see \cite{map-coloring-2007}.

Graph colorings of graph products represent a well-researched area and have a wide range of applications. The ordinary game chromatic number has been studied for Cartesian products \citep{bartnicki-cartesian-2008,zhu-cartesian-2008,sia-2009} and for strong products \citep{enomoto-2023}. Corona products have also been addressed in the ordinary coloring game. \cite{bokhary-2018} obtained exact values for several corona families, including $P_m\circ K_1$ and $P_m\circ C_n$, while \cite{alagammai-2020} determined a general upper bound and exact values for further families of corona products.

\cite{charpentier-2020} introduced the \emph{connected coloring game}, which is played according to the rules of the ordinary coloring game, with the additional requirement that the colored vertices at each stage of the game induce a connected subgraph. The minimum number of colors for which Alice has a winning strategy in this game is the \emph{connected game chromatic number}, denoted by $\chi_{cg}(G)$. It turned out that the connectivity requirement leads to behavior that differs significantly from that of the ordinary game. One of the main phenomena is the non-monotonicity of the connected coloring game with respect to the number of available colors. In particular, it was proved that every nontrivial connected bipartite graph has connected game chromatic number $2$, although for every $k\geq3$ there are connected bipartite graphs on which Bob has a winning strategy in the connected game played with $k$ colors \citep{charpentier-2020}. Furthermore, \cite{bradshaw-2023} studied the related connected game coloring number and resolved the question concerning outerplanar graphs. \cite{hollom-2024} later showed that the connectivity restriction can favor Bob by constructing a graph $G$ with $\chi_g(G)=4$ and $\chi_{cg}(G)=5$. Finally, it was proved that both the Alice-start and Bob-start versions of the connected coloring game are PSPACE-complete \citep{lima-2022}.

In this paper, we study the connected coloring game on corona products. The corona $G\circ H$ was introduced by \cite{frucht-harary-1970}. It consists of one copy of $G$ and a private copy $H^u$ of $H$ for every $u\in V(G)$, with $u$ adjacent to every vertex of $H^u$. This construction has a significant impact on the connected coloring game. Indeed, if the game takes place on the central graph, a copy $H^u$ cannot be reached before the vertex $u$ is colored. Moreover, once $u$ is colored, all vertices of the copy $H^u$ become accessible in terms of connectivity. This naturally divides the game into global play on the central graph and local play within the attached copies.

We first focus on general corona products. We prove the sharp degree bound $\chi_{cg}(G\circ H)\leq\max\{\Delta(G),\Delta(H)+1\}+1$. We also derive an additional bound in terms of the number of available colors by using the Bob-start ordinary coloring game on the second factor; this formulation avoids relying on any monotonicity assumption for the connected coloring game. We then determine the connected game chromatic number of $P_m\circ Q_n$, where $Q_n=K_{n,n}-\mathcal M$ and $\mathcal M$ is a perfect matching. In addition, we consider the Bob-start analogue and show that changing the starting player can alter the required number of colors by an arbitrarily large amount in either direction.

The same family gives several further consequences. The parameter $\chi_{cg}$ is highly non-monotone with respect to connected subgraphs: the difference between the connected game chromatic number of a connected subgraph and that of the ambient connected graph can be arbitrarily large. Moreover, $\chi_{cg}(G\circ H)$ cannot be bounded in terms of $\chi_{cg}(G)$ and $\chi_{cg}(H)$ alone; both factors can have connected game chromatic number $2$, while the corona product has arbitrarily large connected game chromatic number. We also compare the connected and ordinary coloring games. Within the class of corona products, the difference $\chi_g-\chi_{cg}$ can be arbitrarily large. In the opposite direction, by extending Hollom's example, we obtain infinitely many pairwise nonisomorphic connected graphs $G_k$ with $\chi_g(G_k)=4<5=\chi_{cg}(G_k)$.

In the second part of the paper, we focus on corona products of trees. We use the modified, expanded and reduced coloring games introduced by \cite{dllrt-2015} in their study of the game chromatic number of forests. Distinguishing between trees with game chromatic number $3$ and $4$ remains a central structural problem and has motivated a series of further results \citep{fddg-2019,furtado-2023,delrio-2026}. We prove that $\chi_{cg}(T_1\circ T_2)\leq5$ for all trees $T_1$ and $T_2$. Moreover, we give a complete characterization of corona products attaining the value $3$ when $|V(T_2)|\geq2$. This happens precisely when $T_2$ is a star, or when $T_2=P_4$ and $|V(T_1)|$ is odd. Next, we derive sufficient structural conditions for four colors from the reduced coloring game. Finally, we prove that the bound $5$ is attained by a broad family: if $|V(T_1)|\geq2$ and $T_2$ contains $H_d=\operatorname{cat}(0,2,2,2,2,0)$ as a connected subtree, then $\chi_{cg}(T_1\circ T_2)=5$.

The paper is organized as follows. In Section~\ref{sec:preliminaries}, we fix notation and recall the ordinary coloring game, its Alice-start and Bob-start variants, the connected coloring game, and the corona product. We then state general results for corona products, including two upper bounds, starting-player effects, subgraph phenomena, and comparisons between $\chi_g$ and $\chi_{cg}$. Section~\ref{sec:trees} is devoted to corona products of trees. More precisely, it develops transfer principles for auxiliary games, provides a characterization of corona products attaining the value $3$, states sufficient conditions for four colors, and describes families of graphs that require five colors. In Section~\ref{sec:conclusion}, we conclude with several open questions suggested by these results.

\section{Notation and preliminaries}
\label{sec:preliminaries}

All graphs in this paper are finite, simple, and undirected. For a graph $G$, we denote its vertex and edge sets by $V(G)$ and $E(G)$, respectively. The degree of a vertex $v\in V(G)$ is denoted by $d_G(v)$, and the maximum degree of $G$ by $\Delta(G)$. We denote the distance between vertices $u$ and $v$ by $d_G(u,v)$, and the diameter of $G$ by $\operatorname{diam}(G)$. We use the notation $\chi(G)$ and $\omega(G)$ for the chromatic number and the clique number of $G$, respectively.

We denote the complete graph, path, star, and complete bipartite graph by $K_n$, $P_n$, $K_{1,n}$, and $K_{m,n}$, respectively. The graph obtained from $K_{n,n}$ by deleting the edges of a perfect matching $\mathcal M$ is denoted by $Q_n=K_{n,n}-\mathcal M$.
The caterpillar with spine $v_1\ldots v_s$, in which exactly $k_i$ leaves are adjacent to $v_i$ for nonnegative integers $k_1,\ldots,k_s$, is denoted by $\operatorname{cat}(k_1,\ldots,k_s)$.
Let $G$ be a graph and let $C$ be a set of $k$ colors. The ordinary $k$-coloring game on $G$ is a two-player game between Alice and Bob, who alternately color previously uncolored vertices with colors from $C$. Alice moves first, and each move must preserve a proper coloring. Alice wins if all vertices of $G$ are eventually colored, whereas Bob wins as soon as some uncolored vertex has no available color. The least integer $k$ for which Alice has a winning strategy is called the \emph{game chromatic number} of $G$ and is denoted by $\chi_g(G)$ \citep{bodlaender-1991}.

We also consider the Bob-start version of the ordinary coloring game. The rules are the same as in the Alice-start game, except that Bob moves first. The least number of colors for which Alice has a winning strategy in the Bob-start coloring game on $G$ is denoted by $\chi_g^B(G)$.

From now on, assume that $G$ is connected and consider the \emph{connected coloring game}, introduced by \cite{charpentier-2020}. The same rules apply, with the additional requirement that the colored vertices induce a connected subgraph after every move. Thus, except for the first move, a vertex is playable precisely when it is uncolored and has a colored neighbor. The least number of colors for which Alice has a winning strategy is called the \emph{connected game chromatic number} of $G$ and is denoted by $\chi_{cg}(G)$.

The Bob-start connected coloring game is defined analogously, with the only difference that Bob moves first. The corresponding parameter is denoted by $\chi_{cg}^B(G)$.

We will repeatedly use the fact, proved by \cite{charpentier-2020}, that every nontrivial connected bipartite graph $G$ satisfies $\chi_{cg}(G)=2$. Note also that if Alice has a winning strategy in the connected coloring game on a connected bipartite graph $G$ when $k$ colors are available, this does not necessarily imply that she has a winning strategy when more colors are available \citep{charpentier-2020}. %Consequently, whenever the precise palette size matters, we formulate our statements for a fixed number of colors rather than relying on monotonicity.
For graphs $G$ and $H$, the \emph{corona product} $G\circ H$ is obtained from one copy of $G$ and $|V(G)|$ pairwise disjoint copies of $H$ by joining each vertex $u\in V(G)$ to every vertex of one corresponding copy of $H$ \citep{frucht-harary-1970}. We call the original copy of $G$ the \emph{central graph} and denote the copy of $H$ attached to $u$ by $H^u$.

If $x\in V(H^u)$ corresponds to $x_0\in V(H)$, then $d_{G\circ H}(x)=d_H(x_0)+1$, whereas $d_{G\circ H}(u)=d_G(u)+|V(H)|$.
Assume that the connected coloring game is played on the corona product $G\circ H$. We will use two basic observations throughout the paper. First, once the central vertex $u$ is colored, every vertex of $H^u$ is playable with respect to the connectivity condition. Second, if the connected coloring game starts in the central graph, a vertex of $H^u$ can be colored only after $u$ itself has been colored.
In the next section, we use three variants of the coloring game introduced by \cite{dllrt-2015}. We recall two of them here.% the reduced coloring game and the associated reduced graph are recalled when they are used.

The \emph{$k$-modified coloring game} ($k$-MCG) is the ordinary $k$-coloring game in which Bob starts the game and is allowed to pass. The advantage of this game is that moves made by Bob outside a chosen local subgraph may be interpreted as passes in the corresponding local game. \cite{dllrt-2015} proved, in particular, that Alice wins the $4$-MCG on every initially uncolored forest.

The \emph{$k$-expanded coloring game} ($k$-ECG) is the ordinary $k$-coloring game in which Alice may, instead of coloring a vertex, pass or add a single already colored leaf to the current forest. This facilitates Bob's local strategies. Namely, Alice's move outside a connected induced subforest can be represented locally either by a pass or, when the move is adjacent to the subforest, by the addition of a colored leaf \citep{dllrt-2015}.

We also use the well-known bound of \cite{faigle}, which states that every forest $F$ satisfies $\chi_g(F)\leq4$. In addition, the MCG, ECG, and reduced-game methods of \cite{dllrt-2015} will be used to distinguish between forests requiring two, three, or four colors.

\subsection{General upper bounds}

We first prove two general upper bounds. The first of them is sharp.

\begin{theorem}
\label{thm:general-degree-bound}
Let $G$ and $H$ be connected graphs. Then
\begin{center}
$\chi_{cg}(G\circ H)\leq \max\{\Delta(G),\Delta(H)+1\}+1.$
\end{center}
Moreover, the bound is sharp.
\end{theorem}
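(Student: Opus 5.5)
The plan is to show that with $k=\max\{\Delta(G),\Delta(H)+1\}+1$ colors Alice wins no matter how either player moves, provided only that her first move is on the central graph. The whole argument rests on the two basic observations about $G\circ H$ stated in Section~\ref{sec:preliminaries}, and the aim is to verify that no uncolored vertex can ever run out of colors.

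\emph{Step 1: copies stay empty until their root is colored.} Alice's first move colors an arbitrary central vertex. The only neighbor of $H^u$ outside $H^u$ is $u$. By the connectivity rule, a vertex of $H^u$ can therefore become playable only through $u$ or through an already colored vertex of $H^u$. By induction on the number of moves, no vertex of $H^u$ is colored while $u$ is uncolored.

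\emph{Step 2: no vertex is ever blocked.} Consider an uncolored central vertex $u$. By Step~1, all of $H^u$ is uncolored, so its colored neighbors lie in $G$. There are at most $\Delta(G)\le k-1$ of them, so some color is still free. Now consider an uncolored vertex $x\in V(H^u)$. Its degree in $G\circ H$ is $d_H(x_0)+1\le\Delta(H)+1\le k-1$, so again a color is free. Hence at every stage each uncolored vertex has an available color.

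\emph{Step 3: a legal move always exists, so Alice wins.} As long as some vertex is uncolored, connectivity of $G\circ H$ (which follows from $G$ connected) gives an uncolored vertex adjacent to the colored set. That vertex is playable with a free color by Step~2. Since the game is finite, it ends with the whole graph colored. Thus $\chi_{cg}(G\circ H)\le k$.

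\emph{Sharpness.} I would use the clique bound $\chi_{cg}\ge\chi\ge\omega$. Take $G=K_1$ and $H=K_n$: then $G\circ H=K_{n+1}$, which needs $n+1=\Delta(H)+2$ colors, matching the bound. Alternatively, take $G=K_n$ with $n\ge3$ and $H=K_1$: the bound equals $n=\omega(K_n\circ K_1)$. This shows that either term of the maximum can be the one that is attained.

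The only delicate point is Step~1. The large degree $d_G(u)+|V(H)|$ of a central vertex could otherwise let Bob block $u$ by filling $H^u$ with many colors. This is exactly why Alice must open in the central graph, after which the connectivity rule protects every central vertex.
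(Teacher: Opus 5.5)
Your proof is correct and follows the paper's argument: Alice opens in the central graph, each copy $H^u$ stays uncolored until $u$ is colored, colored neighbors are bounded by $\Delta(G)$ for central vertices and by $\Delta(H)+1$ for copy vertices, and connectivity of $G\circ H$ guarantees a playable vertex. For sharpness the paper uses $K_n\circ K_n$, where $v$ together with $K_n^v$ forms a clique of order $n+1$. Your examples $K_1\circ K_n=K_{n+1}$ and $K_n\circ K_1$ rely on the same clique bound, and the second one also shows that the $\Delta(G)$ term of the maximum can be the one attained.
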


\begin{proof}
Let $k=\max\{\Delta(G),\Delta(H)+1\}+1$, and suppose that Alice and Bob play the connected coloring game on $G\circ H$ with $k$ colors. We show that Alice wins the game.

Alice starts by coloring an arbitrary vertex of the central copy of $G$. Observe that, for any $v\in V(G)$, no vertex of $H^v$ can be colored before $v$ itself is colored, due to the connectivity requirement. Indeed, the only edges joining $H^v$ to the rest of the corona product $G\circ H$ are the edges between $v$ and the vertices of $H^v$.

Let $v\in V(G)$ be an uncolored playable vertex. Since $v$ is uncolored, the copy $H^v$ is still completely uncolored. Hence every colored neighbor of $v$ lies in the central copy of $G$, and therefore $v$ has at most $\Delta(G)$ colored neighbors. Since $k\geq \Delta(G)+1$, at least one color is available at $v$.

Now let $x$ be an uncolored vertex of a copy $H^v$. The vertex $x$ is adjacent to $v$ and to at most $\Delta(H)$ vertices within $H^v$. Hence $x$ has at most $\Delta(H)+1$ colored neighbors. Since $k\geq \Delta(H)+2$, at least one color is available at $x$.

Therefore, every uncolored playable vertex can be properly colored throughout the game. Since $G\circ H$ is connected, whenever uncolored vertices remain, at least one of them is adjacent to the colored subgraph and is therefore playable. Thus, the game cannot end before all vertices have been colored, and Alice wins with $k$ colors.

To prove that the bound is sharp, take $G=H=K_n$. The above bound gives $\chi_{cg}(K_n\circ K_n)\leq n+1$. On the other hand, for every vertex $v$ of the central copy of $K_n$, the vertex $v$ together with the vertices of $K_n^v$ induces a clique of order $n+1$. Consequently, $\chi_{cg}(K_n\circ K_n)\geq n+1$, and equality follows.
\end{proof}

One of the most intriguing phenomena in coloring games concerns monotonicity with respect to the number of colors available in the game. The question of whether a winning strategy for Alice with $k$ colors in the ordinary coloring game also guarantees her a win when more than $k$ colors are available still remains open \citep{charpentier-2020,hollom-2024}. To avoid relying on any monotonicity assumption for the connected coloring game, we state the following result directly in terms of the number of available colors.

Recall that $\chi_g^B(H)$ denotes the minimum number of colors for which Alice has a winning strategy in the ordinary coloring game on $H$ when Bob makes the first move.

\begin{theorem}
\label{thm:odd-even-transfer}
Let $G$ and $H$ be connected graphs such that $|V(G)|$ is odd and $|V(H)|$ is even, and let $k\geq 2$. Suppose that Alice has a winning strategy in the connected $k$-coloring game on $G$ and a winning strategy in the Bob-start ordinary $(k-1)$-coloring game on $H$. Then Alice has a winning strategy in the connected $k$-coloring game on $G\circ H$.
\end{theorem}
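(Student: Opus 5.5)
The plan is to have Alice combine her two given strategies, one played on the central graph and one played locally inside each copy $H^v$. The two parity assumptions are what keep these local games synchronized. Fix a winning strategy $\sigma_G$ for Alice in the connected $k$-coloring game on $G$, and a winning strategy $\sigma_H$ for her in the Bob-start ordinary $(k-1)$-coloring game on $H$. Alice opens with the first move prescribed by $\sigma_G$ in the central copy of $G$. From then on she follows one rule: \emph{she always answers in the same part in which Bob has just played.}
\begin{itemize}
\item If Bob colors a central vertex, Alice answers in $G$ according to $\sigma_G$, treating the central moves played so far as a game on $G$.
\item If Bob colors a vertex of some copy $H^v$, Alice answers in $H^v$ according to a separate copy of $\sigma_H$. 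In that copy, Bob's move in $H^v$ is his first move, and the color of $v$ is deleted from the palette.
\end{itemize}

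\textbf{Step 1: central play is an honest connected game on $G$.} By the second basic observation, $H^v$ is entirely uncolored until $v$ is colored. So when a central vertex is colored, its only colored neighbors lie in $G$. Also, the colored central vertices must themselves induce a connected subgraph, because every path in $G\circ H$ between central vertices that leaves $G$ would have to enter some $H^u$ and then return through $u$. Hence the sequence of central moves is a legal connected game on $G$ with $k$ colors. Under Alice's rule these moves alternate, starting with Alice. Since $|V(G)|$ is odd, Bob's central moves are exactly the even-numbered ones. Therefore, after any Bob central move, an uncolored central vertex remains, which is playable in $G$, and $\sigma_G$ has a legal reply. Moreover, $\sigma_G$ guarantees that no central vertex ever becomes uncolorable.

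\textbf{Step 2: each copy is an honest Bob-start $(k-1)$-game on $H$.} Once $v$ is colored, the first basic observation makes every vertex of $H^v$ playable. Every vertex of $H^v$ is adjacent to $v$, so the colors effectively available inside $H^v$ are exactly the $k-1$ colors other than the color of $v$. Vertices of $H^v$ have no neighbors outside $H^v\cup\{v\}$, so moves made elsewhere do not affect the local game. By Alice's rule, Bob makes every odd-numbered move in $H^v$ and Alice every even-numbered one, so the local play is precisely the Bob-start ordinary $(k-1)$-coloring game on $H$. Because $|V(H)|$ is even, after each Bob move in $H^v$ an uncolored vertex of $H^v$ remains for Alice's reply. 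By $\sigma_H$, no vertex of $H^v$ ever becomes uncolorable. Vertices of copies whose center is still uncolored have no colored neighbors, so they are never blocked either.

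\textbf{Step 3: Alice is never forced to break the rule.} The main point to check is exactly this. Bob never needs to play in the center at a moment when Alice would then be unable to answer, and Alice never needs to open a copy herself, which would make her the starting player there. Steps 1 and 2 show that Alice always has a legal reply in the part where Bob moved. It follows that, after each of Alice's turns, every copy has an even number of colored vertices. In addition, either the center has an odd number of colored vertices with Bob to move, or the center is full. By Step 1, Alice makes the last central move.

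Afterwards, any Bob move necessarily lies in a copy, and that move starts or continues a local game in which Alice already has her reply. Thus no vertex ever becomes uncolorable, and since $G\circ H$ is connected, play continues until every vertex is colored. So Alice wins the connected $k$-coloring game on $G\circ H$.
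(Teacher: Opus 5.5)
Your proposal is correct and follows the same approach as the paper: Alice always answers in the part where Bob just moved, using her connected strategy on $G$ in the centre and an independent Bob-start $(k-1)$-color strategy in each copy $H^v$ with $c(v)$ removed from the palette. As in the paper, the even order of $H$ keeps the copy moves in Bob--Alice pairs, and the odd order of $G$ lets Alice make the last central move, so she never has to open a copy; your Step~1 check that the colored central vertices induce a connected subgraph of $G$ is simply more explicit than the paper's version.
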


\begin{proof}
Assume that Alice and Bob play the connected $k$-coloring game on $G\circ H$. Alice's strategy is as follows. She starts the game in the central copy of $G$ and follows her winning strategy for the connected $k$-coloring game on $G$. Whenever Bob colors a vertex of $G$, Alice responds in $G$ according to this strategy. Whenever Bob colors a vertex in a copy $H^v$, for some $v\in V(G)$, Alice responds in the same copy according to her winning strategy for the Bob-start ordinary $(k-1)$-coloring game on $H$, using the $k-1$ colors different from $c(v)$.

We first justify the local strategy. A copy $H^v$ cannot be entered before $v$ is colored. Once $v$ is colored, the connectivity requirement imposes no further restriction inside $H^v$, since every vertex of $H^v$ is adjacent to the colored vertex $v$. Moreover, the color $c(v)$ is forbidden at every vertex of $H^v$. Therefore, the game restricted to $H^v$ is precisely a Bob-start ordinary coloring game on $H$ with the $k-1$ colors different from $c(v)$.

Since Bob is the first player to move in every entered copy and $|V(H)|$ is even, Alice can always make her response in the same copy. Indeed, after each of Alice's local responses, an even number of vertices of that copy has been colored. Hence, whenever Bob next plays in that copy, either the copy is already completely colored or at least two vertices remain uncolored. Thus, Bob can never color the last vertex of a copy, and Alice makes the last move in every entered copy.

Consequently, all moves in the attached copies occur in Bob--Alice pairs and therefore do not affect the alternation of moves in the central copy of $G$. The game restricted to $G$ thus proceeds as a connected $k$-coloring game in which Alice moves first. Since $|V(G)|$ is odd, Alice also makes the last move in $G$. At that point it is Bob's turn, and hence Alice is never forced to be the first player to enter a previously untouched copy of $H$.

We conclude that Alice can simultaneously follow her winning strategy for the connected $k$-coloring game on $G$ and, in every entered copy of $H$, her winning strategy for the Bob-start ordinary $(k-1)$-coloring game on $H$. Therefore, she wins the connected $k$-coloring game on $G\circ H$.
\end{proof}

\begin{corollary}
\label{cor:odd-even-equal}
Let $G$ and $H$ be connected graphs such that $|V(G)|$ is odd and $|V(H)|$ is even. If $\chi_{cg}(G)=\chi_g^B(H)+1=k$, then $\chi_{cg}(G\circ H)\leq k$.
\end{corollary}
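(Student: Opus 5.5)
This corollary should follow directly from Theorem~\ref{thm:odd-even-transfer}, applied with $k=\chi_{cg}(G)$. The plan is to check the three hypotheses of that theorem: the parity conditions, $k\geq 2$, and the two winning strategies.

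First, the parity conditions $|V(G)|$ odd and $|V(H)|$ even are assumed directly in the corollary.

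Second, we need $k\geq 2$. Since $|V(H)|$ is even, $H$ has at least two vertices. Because $H$ is connected, it therefore has an edge, so $\chi_g^B(H)\geq\chi(H)\geq 2$ and hence $k=\chi_g^B(H)+1\geq 3$.

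Third, we need the two strategies with exactly $k$ and $k-1$ colors. By the definitions of $\chi_{cg}$ and $\chi_g^B$ as least numbers of colors for which Alice wins, Alice has a winning strategy in the connected $\chi_{cg}(G)$-coloring game on $G$. This is the connected $k$-coloring game on $G$. Likewise, she has a winning strategy in the Bob-start ordinary $\chi_g^B(H)$-coloring game on $H$, which is the Bob-start ordinary $(k-1)$-coloring game on $H$.

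The only point needing care is exactly this one: no monotonicity in the number of colors is required. The hypothesis $\chi_{cg}(G)=\chi_g^B(H)+1$ makes the two palette sizes match precisely the sizes $k$ and $k-1$ demanded in Theorem~\ref{thm:odd-even-transfer}. This is why the corollary is stated with an equality rather than an inequality.

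Applying the theorem, Alice wins the connected $k$-coloring game on $G\circ H$. By the definition of $\chi_{cg}$ as a minimum, it follows that $\chi_{cg}(G\circ H)\leq k$.
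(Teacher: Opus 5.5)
Your proposal is correct and follows the paper's route exactly: the paper gives no separate proof, treating the corollary as an immediate instance of Theorem~\ref{thm:odd-even-transfer} with $k=\chi_{cg}(G)$, and you correctly note that the minima defining $\chi_{cg}(G)$ and $\chi_g^B(H)$ are attained, so Alice has winning strategies with exactly $k$ and $k-1$ colors and no monotonicity is needed. Your check that $k\geq 2$ (indeed $k\geq 3$, since $H$ is connected with at least two vertices and hence has an edge) supplies the one hypothesis the paper leaves implicit.
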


\subsection{Alice-start versus Bob-start connected coloring games on corona products}

In this subsection, we show that the difference between the connected game chromatic numbers of the Alice-start and Bob-start games can be arbitrarily large in either direction.
Recall that we denote by $Q_n$ the graph $K_{n,n}-\mathcal M$, where $\mathcal M$ is a perfect matching. For the $i$-th copy $Q_n^i$ in a corona product, let $A_i=\{a_1^i,\ldots,a_n^i\}$ and $B_i=\{b_1^i,\ldots,b_n^i\}$ be its partite sets, where $a_j^ib_j^i\notin E(Q_n^i)$ for every $j$.

\begin{proposition}
\label{prop:path-Qn}
For all integers $m,n\geq 2$,
\begin{center}
$\chi_{cg}(P_m\circ Q_n)=
\begin{cases}
n+1, & \text{if $m$ is even},\\
3, & \text{if $m$ is odd}.
\end{cases}$
\end{center}
\end{proposition}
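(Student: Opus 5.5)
For $m$ odd we will use Theorem~\ref{thm:odd-even-transfer}, and for $m$ even Theorem~\ref{thm:general-degree-bound}. In both cases the matching lower bound comes from a strategy argument inside a single copy $Q_n^i$. The triangle formed by a central vertex $u_i$ and an edge $a_j^ib_k^i$ ($j\neq k$) gives $\chi_{cg}(P_m\circ Q_n)\geq 3$ for all $m,n\geq2$. This already settles the odd case lower bound and the even case with $n=2$.

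\textbf{The case $m$ odd.} Here $|V(P_m)|$ is odd and $|V(Q_n)|=2n$ is even, so we apply Theorem~\ref{thm:odd-even-transfer} with $k=3$. Alice wins the connected $3$-coloring game on $P_m$ trivially, because every vertex has at most $2$ colored neighbours; this avoids any monotonicity issue with $\chi_{cg}(P_m)=2$.

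It remains to show that Alice wins the Bob-start ordinary $2$-coloring game on $Q_n$. Suppose Bob first colors, say, $a_1$ with $c$. Alice answers by coloring some $b_j$, $j\neq1$, with the other color $d$. Then every $a_\ell$ with $\ell\neq j$ is adjacent to $b_j$ and is forced to $c$. Also $a_j$ is adjacent to $b_1$ but not to $b_j$, so $a_j$ can also only receive $c$. Symmetrically, every vertex of $B$ can only receive $d$. Hence the coloring is forced to be the proper bipartition coloring, and it never gets stuck. This gives the upper bound $3$.

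\textbf{The case $m$ even.} We have $\Delta(P_m)=2$ and $\Delta(Q_n)+1=n$, so Theorem~\ref{thm:general-degree-bound} yields $\chi_{cg}\leq n+1$. For the lower bound with $n\geq3$, we describe a Bob strategy with $n$ colors.

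\emph{Local strategy.} Inside a copy whose centre $u_i$ is colored, only $n-1$ colors are usable. Bob keeps the following invariant in every copy: the colored vertices of the copy form full pairs $\{a_j^i,b_j^i\}$, both vertices of a pair get the same color, and distinct pairs get distinct colors. Whenever Alice colors a vertex of a copy, Bob colors its partner with the same color. This move is legal because, by the invariant, that color appears nowhere else in the copy.

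\emph{Global strategy.} Whenever Alice colors a central vertex, Bob colors another central vertex. This is possible because an odd number of path vertices are colored, the path is connected, and each central vertex has at most $2<n$ colored neighbours. Since $m$ is even, the central moves pair up, and Bob is never the first to play in a copy. Consequently Alice is eventually forced to enter copies, and Bob mirrors inside them.

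\emph{Conclusion.} Under the invariant, each pair in a copy consumes a fresh color. A copy has $n$ pairs but only $n-1$ available colors. Once $n-1$ pairs are colored, the remaining pair has no available color, so Bob wins.

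The delicate point is this even-case lower bound. We must check that the pairing invariant survives every Alice move, including the first move into a copy. We must also check that the central parity argument really keeps Bob from ever having to open a copy, together with the small-case check $n=2$, which is covered by the triangle bound.
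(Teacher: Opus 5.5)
Your plan (Theorem~\ref{thm:odd-even-transfer} for odd $m$, Theorem~\ref{thm:general-degree-bound} plus a pairing strategy for even $m$) works in outline, but two steps fail as written.

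\textbf{Odd $m$.} Your strategy for the Bob-start ordinary $2$-coloring game on $Q_n$ loses for $n\geq 3$. After $a_1\mapsto c$ and $b_j\mapsto d$ with $j\neq 1$, the vertices $a_j$ and $b_1$ are still unconstrained. Your claim that $a_j$ can only receive $c$ relies on $b_1$, which is uncolored. Bob now colors $a_j$ with $d$. This is legal, since the only colored vertex of $B$ is $b_j$, which is not a neighbour of $a_j$. Then any $b_\ell$ with $\ell\notin\{1,j\}$ sees $c$ on $a_1$ and $d$ on $a_j$, so it is uncolorable.

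Alice must instead answer at the $\mathcal M$-partner, meeting $a_1\mapsto c$ with $b_1\mapsto d$. Then each $a_\ell$ with $\ell\neq 1$ is adjacent to $b_1$, and each $b_\ell$ with $\ell\neq 1$ is adjacent to $a_1$. So every remaining vertex is forced to its bipartition color, which is a proper coloring. This is exactly the response the paper uses directly on the corona (the partner, with the third color). Note also that Theorems~\ref{thm:general-degree-bound} and~\ref{thm:odd-even-transfer} assume $H$ is connected, while $Q_2=2K_2$ is not. Their proofs never use this assumption, but you should say so.

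\textbf{Even $m$.} Your Bob strategy is undefined when Alice's very first move is a vertex $a_j^i$ of a copy whose centre $u_i$ is uncolored. The mirror move at $b_j^i$ then violates connectivity, since $b_j^i$ is not adjacent to $a_j^i$. In this line of play you also have not shown that Alice makes the first central move, and the invariant ``every copy consists of full monochromatic pairs'' cannot be set up in this copy at all.

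The paper handles this case by dropping the pairing invariant in that copy. Bob answers each of Alice's moves with any legal move in the component of $Q_n^i$ she entered, which has even order $2n$ for $n\geq 3$. So either Alice colors $u_i$ herself, or Bob exhausts the component and she is forced to. Afterwards Bob answers central moves on the path and copy moves in the same copy. In $Q_n^i$ any legal move suffices there, because if none exists, some vertex is already uncolorable. Since $m$ is even, Bob colors the last central vertex, and its copy is still untouched. Alice must therefore eventually open an untouched copy with a colored centre. Only there is your mirror invariant needed, forcing $n$ distinct pair colors in addition to the centre's color.
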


\begin{proof}
Denote the vertices of the central path by $x_1,\ldots,x_m$, and let $Q_n^i$ be the copy attached to $x_i$.

First suppose that $m$ is even. In order to prove the lower bound, we consider a connected coloring game played with only $n$ colors. Until the central path is completely colored, Bob's winning strategy relies on the following pairing rule. Whenever Alice colors a vertex of $P_m$ and there remains an uncolored vertex of $P_m$, Bob responds by coloring an uncolored vertex of $P_m$ adjacent to the colored segment. Whenever Alice colors a vertex in an attached copy $Q_n^i$ for any $i\in\{1,2,\ldots,m\}$, Bob next makes a legal move in the same currently accessible part of that copy. Note that, if no such response is possible while uncolored vertices remain there, then the coloring of that copy cannot be completed and Bob has already won.

We first prove that Alice has to be the first player to color a vertex of $P_m$. Clearly, this is true if the game starts on the path. Therefore, suppose that the game starts in a copy $Q_n^i$. Before $x_i$ is colored, the connectivity condition restricts the game to the connected component of $Q_n^i$ in which Alice made her first move. For $n\geq 3$, the graph $Q_n$ is connected and has $2n$ vertices, and for $n=2$, each component of $Q_2$ is an edge. In either case, the relevant component has even order. Since Bob answers each move of Alice in that component, either Alice colors $x_i$ before the component is exhausted, or Bob makes the last move in the component and Alice is then forced to color $x_i$. This proves that Alice makes the first move on the central path.

From this point on, every move of Alice outside the path is paired with a response of Bob in the same copy, while every move of Alice on the unfinished path is paired with a response of Bob on the path. Hence the moves restricted to $P_m$ alternate, with Alice moving first. Since $m$ is even, Bob colors the last vertex of the path; denote it by $x_j$. The copy $Q_n^j$ is untouched at this moment. Indeed, it was inaccessible before $x_j$ was colored, unless the game started in $Q_n^j$. In the latter case, $x_j$ would have been the first, rather than the last, vertex of the path to be colored.

At this moment, it is Alice's turn. Every copy that has already been entered contains an even number of colored vertices, and therefore also an even number of uncolored vertices. If Alice continues in such a copy, Bob can keep responding there until either he wins or he makes the last move in that copy. Since at least one attached copy is still untouched, after finitely many such completions Alice is forced to be the first player to enter an untouched copy $Q_n^\ell$ whose central vertex $x_\ell$ is already colored.

Bob now plays along the missing edges of $\mathcal{M}$. More precisely, whenever Alice colors $a_k^\ell$ or $b_k^\ell$, Bob colors the nonadjacent counterpart $b_k^\ell$ or $a_k^\ell$, respectively, with the same color. This response is legal. Indeed, every previously completed pair has one common color. Alice is not able to use the color of a previously completed pair on a new pair, since her chosen vertex is adjacent to the opposite vertex of that previous pair. Consequently, distinct pairs along the missing edges of $\mathcal{M}$ must receive distinct colors.

Thus the $n$ pairs $\{a_k^\ell,b_k^\ell\}$ require $n$ distinct colors, and each of these colors must be different from $c(x_\ell)$. Thus $\chi_{cg}(P_m\circ Q_n)\geq n+1$.

For the reverse inequality, assume that $n+1$ colors are available in the connected coloring game played on $P_m\circ Q_n$. Alice first colors the vertex $x_1$. As long as the central path is not completely colored, whenever it is her turn she colors an uncolored vertex of $P_m$ adjacent to the colored segment. Such a vertex has exactly one colored neighbor on the path. Moreover, its attached copy of $Q_n$ is still untouched because of the connectivity condition of the game. Thus, a legal color is available for it. Every vertex in an attached copy $Q_n^i$ has degree $n$ in the corona product. Namely, it has $n-1$ neighbors in the opposite partite set and one neighbor $x_i$ in the central path. Therefore, no vertex of an attached copy can become uncolorable when the game is played with $n+1$ colors. We conclude that Alice wins the game with $n+1$ colors and $\chi_{cg}(P_m\circ Q_n)\leq n+1$.

Now assume that $m$ is odd. Since the considered corona contains a triangle, at least three colors are necessary. We show that three colors also suffice.

We consider the connected coloring game on $P_m\circ Q_n$ with three colors and show that Alice has a winning strategy in such a game. She starts by playing $x_1$. Whenever Bob plays on the unfinished central path, Alice extends the colored segment of the path. Whenever Bob colors a vertex $a_k^i$ or $b_k^i$ in an attached copy $Q_n^i$, Alice colors the vertex in the opposite bipartite set along the missing edge of $\mathcal{M}$ with the unique color different from both $c(x_i)$ and the color just used by Bob.

Every pair that is entered is therefore completed immediately. On the first such pair in $Q_n^i$, suppose Bob colors $a_k^i$ with color $c_1$ and Alice colors $b_k^i$ with color $c_2$. Then $c_1,c_2,c(x_i)$ are the three distinct colors. Every remaining vertex of $A_i$ can then receive $c_1$, since it is adjacent to $x_i$, colored with $c(x_i)$, and to $b_k^i$, colored with $c_2$. Similarly, every remaining vertex of $B_i$ can receive $c_2$, since it is adjacent to $x_i$ and to $a_k^i$, colored with $c_1$. This means that all later responses in the copy are legal and predetermined.

Moves in attached copies occur in Bob--Alice pairs and do not affect the alternation on $P_m$. Alice is the first player on the path and, since $m$ is odd, she also colors its last vertex. Hence Bob is the first player to enter every attached copy, so the preceding local strategy applies in each of them. Thus Alice wins the game with three colors.
\end{proof}

Recall that the invariant in the Bob-start connected version of the game is denoted by $\chi_{cg}^B$.

\begin{proposition}
\label{prop:Bob-path-Qn}
For all integers $m\geq 2$ and $n\geq 3$,
\begin{center}
$\chi_{cg}^B(P_m\circ Q_n)=
\begin{cases}
4, & \text{if $m$ is even},\\
n+1, & \text{if $m$ is odd}.
\end{cases}$
\end{center}
\end{proposition}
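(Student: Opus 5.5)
The plan is to reuse the two mechanisms from the proof of Proposition~\ref{prop:path-Qn}. The first is the \emph{parity pairing}: one player answers every move on the unfinished path $P_m$ on the path, and every move in a copy $Q_n^i$ in the same copy. This decides who is forced to enter a fresh copy first. The second is the \emph{matching strategy} along the missing edges of $\mathcal M$. When Bob uses it, he colors the non-neighbour of Alice's vertex with the same color, which forces $n$ distinct colors on the $n$ pairs, in addition to $c(x_\ell)$. When Alice uses it, she colors the non-neighbour with a color different from both $c(x_i)$ and Bob's color, which keeps each side of the copy essentially monochromatic. The only new feature compared with Proposition~\ref{prop:path-Qn} is that Bob now makes the first move, which shifts the parity by one.

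\textbf{Odd $m$.} For the upper bound I would repeat the degree argument. A playable path vertex has at most two colored neighbours, since its copy is still untouched, and $n+1\ge 4$. Every copy vertex has degree $n$ in $P_m\circ Q_n$. Hence $n+1$ colors always suffice.

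For the lower bound with $n$ colors, Bob opens on $x_1$ and afterwards plays the pairing rule. If Alice ever enters an untouched copy $Q_n^\ell$, Bob immediately switches to the matching strategy there, and that copy cannot be completed with $n$ colors. Otherwise, the $m-1$ remaining path vertices form an even number with Alice moving first on them, so Bob colors the last path vertex $x_j$. At that moment every entered copy contains an even number of colored vertices and $Q_n^j$ is still untouched. Alice is therefore eventually forced to open a fresh copy, and the matching strategy wins for Bob.

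\textbf{Even $m$, upper bound with $4$ colors.} Alice answers Bob in the same part: on the path she extends the colored segment, and in a copy she plays the matching response. Inside a copy three colors are available besides $c(x_i)$, so the rule from the odd case of Proposition~\ref{prop:path-Qn} applies and always leaves a legal color.

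If Bob opens on the path, the path moves alternate with Bob first. Since $m$ is even, Alice colors the last path vertex, so Bob is the first to enter every copy.

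If Bob opens inside some $Q_n^i$ (which is connected, since $n\ge3$), Alice answers locally along $\mathcal M$ as long as the play stays in that copy. As soon as the path becomes relevant, she takes $x_i$ herself. I will need to check that after this the remaining $m-1$ path vertices, which form an odd number, are split so that Alice makes the last path move, and that the partially colored copy $Q_n^i$ still has Bob to move on an even remainder. If the parity fails in one of these subcases, Alice's fix is to spend a tempo by completing a matching pair.

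\textbf{Even $m$, lower bound (Bob wins with $3$ colors).} I expect this to be the main obstacle. Bob must open inside a copy, say coloring $a_1^i$ with color $1$. Consider first the reply where Alice colors $x_i$, say with color $2$. Bob then colors $a_2^i$ with color $3$, and every $b_j^i$ with $j\ge3$ sees all three colors, which kills it because $n\ge 3$.

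If Alice instead replies inside the copy, I would do a case analysis on whether she colors an $A$-vertex or a $B$-vertex, and with which color. In each case Bob aims either to place two distinct colors on one side together with a third color at $x_i$, or to color $x_i$ so that some opposite vertex loses all three colors. The delicate case is $n=3$, where there are few spare indices. There I would exhaust the possibilities by hand, using that $Q_3$ is a $6$-cycle.
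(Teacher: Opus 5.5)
Your treatment of odd $m$, and of even $m$ when Bob opens on the path, follows the paper, up to one detail noted at the end. The genuine gap is the even-$m$ upper bound when Bob opens inside a copy, which is where the paper's proof does its real work.

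Your first step there, ``Alice answers locally along $\mathcal M$'', is not a legal move. After Bob colors $a_1^i$, its counterpart $b_1^i$ is not adjacent to $a_1^i$, and $x_i$ is still uncolored. So by connectivity Alice may only color $x_i$ or some $b_k^i$ with $k\neq 1$. The second option only lets Bob color some $a_\ell^i$, $\ell\notin\{1,k\}$, with a third color. Alice must then take $x_i$ at once, before Bob puts the fourth color next to it. So in effect Alice answers at $x_i$, as in the paper. But then both parities you planned to check are wrong: $Q_n^i$ has $2n-1$ uncolored vertices and the path has $m-1$, both odd, with Bob to move. The pair $\{a_1^i,b_1^i\}$ therefore has to stay open, and $b_1^i$ is Alice's single tempo. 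She colors it right after Bob's last path move, or, if Bob colors it first, she gains the extra move on the path. Your phrase ``spend a tempo by completing a matching pair'' points at this move but gives no reason why it stays available.

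That reason is the missing idea. While $b_1^i$ waits, possibly for an arbitrarily long time, Bob plays freely in $Q_n^i$. Every vertex of $A_i\setminus\{a_1^i\}$ is a neighbor of $b_1^i$, so Bob tries to place there all three colors other than $c(x_i)$. You also need $Q_n^i$ to be completable after $b_1^i$ is colored, including when $b_1^i$ receives $c(a_1^i)$, which then blocks that color on the rest of $A_i$. The paper secures both with an explicit rule:
\begin{itemize}
\item answer $b_\ell^i$ by $a_\ell^i$ in color $c_1=c(a_1^i)$;
\item at Bob's first move $a_t^i\neq a_1^i$, answer $b_t^i$ with some $c_3\notin\{c_1,c(x_i),c(a_t^i)\}$, after which $c_3$ never appears on $A_i$ and so stays available at $b_1^i$;
\item finish with a case check on $c(b_1^i)$.
\end{itemize}
Your looser rule, completing each pair with a color different from Bob's and from $c(x_i)$, can also be made to work. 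Suppose completed pairs had $c(a_p^i)=c_3$ and $c(a_q^i)=c_4$. Then $b_p^i$, which avoids $c_1$, $c(x_i)$ and $c_3$, would carry $c_4$ next to $a_q^i$, a contradiction. Hence $A_i\setminus\{a_1^i\}$ never shows both $c_3$ and $c_4$. But this, and the completion after $b_1^i$ is colored, must actually be argued.

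Two smaller points. First, in the even lower bound, the branch where Alice replies inside the copy needs no case analysis, and $n=3$ is not delicate. Connectivity leaves her only $b_k^i$ with $k\neq 1$, since no $A$-vertex is playable. Bob then colors $a_\ell^i$, $\ell\notin\{1,k\}$, with the third color through $b_k^i$, and $x_i$ sees all three colors. Your other branch, answering $x_i$ with $a_2^i$ in the third color, is correct. It kills faster than the paper's reply, which colors $b_1^i$ with color $1$ and then argues that both partite sets would need the single remaining color. Second, in the odd upper bound, if Bob opens in $Q_n^i$, Alice must color $x_i$ immediately. Otherwise your claim that a playable path vertex has an untouched copy fails for $x_i$.
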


\begin{proof}
Suppose first that $m$ is even and that Alice and Bob play the Bob-start connected coloring game on $P_m\circ Q_n$. We show that four colors suffice.

Assume first that Bob starts the game on the central path. Alice responds to every move of Bob on the unfinished path by extending the colored segment of $P_m$. If Bob enters an attached copy $Q_n^i$ for any $i\in{1,2,\ldots,m}$, Alice responds in the same copy. When Bob makes his first move there, say at $a_k^i$ with color $c_1$, Alice responds by coloring $b_k^i$ with a color $c_2$ different from both $c_1$ and $c(x_i)$. From this moment on, color $c_1$ is available at every uncolored vertex of $A_i$, while color $c_2$ is available at every uncolored vertex of $B_i$. Therefore, Alice can answer every subsequent move of Bob in $Q_n^i$ by coloring the nonadjacent counterpart with the appropriate one of these two colors. In this way, Alice colors the last vertex of every entered copy, since $|V(Q_n)|$ is even. Moreover, all moves in attached copies occur in Bob--Alice pairs. The moves restricted to the even path also alternate between Bob and Alice, with Bob moving first, which means that Alice makes the last move on $P_m$. It is then Bob's turn, so he is the first player to enter every completely uncolored copy. Consequently, Alice can finish the game using four colors.

Next assume that Bob starts the game in a copy $Q_n^i$ for some $i\in{1,2,\ldots,m}$. By symmetry, suppose that he first colors the vertex $a_1^i$ with color $c_1$. Alice responds by coloring the central vertex $x_i$ with a color $c_2\neq c_1$. We call ${a_1^i,b_1^i}$ the initial pair. It is the only pair that is incomplete without having been opened by Bob after its central vertex has been colored. Alice leaves $b_1^i$ uncolored as long as possible and uses it as a parity correction. Whenever Bob plays on the unfinished central path, Alice extends the colored segment of the path, except in the single parity-correction step described below. We first describe Alice's local strategy in $Q_n^i$, which ensures that $b_1^i$ always remains colorable.

Before Bob ever colors a vertex of $A_i-{a_1^i}$, whenever he colors $b_{\ell}^i$ with $\ell\neq1$, Alice responds by coloring $a_{\ell}^i$ with color $c_1$. Alice's move is legal because every vertex of $B_i-{b_1^i}$ is adjacent to $a_1^i$ and thus cannot be colored with color $c_1$.

Suppose next that Bob colors a vertex $a_t^i\in A_i-{a_1^i}$. Among the two colors different from $c_1$ and $c_2$, Alice chooses a color $c_3$ different from $c(a_t^i)$ and colors $b_t^i$ with this color. At this moment, every still uncolored vertex of $A_i-{a_1^i,a_t^i}$ can receive color $c_1$. From now on, $c_3$ is forbidden at every still uncolored vertex of $A_i$, because all such vertices are adjacent to $b_t^i$. Alice can therefore maintain the following rule. If Bob colors $b_s^i$, she colors $a_s^i$ with $c_1$, while if Bob colors $a_s^i$, she colors $b_s^i$ with $c_3$. Both responses are legal. In particular, $c_3$ never appears on a neighbor of $b_1^i$, so $b_1^i$ remains colorable.

The same immediate pairing strategy is used in every other attached copy $Q_n^j$ once its central vertex has been colored. Indeed, after Bob opens the first missing pair, Alice colors its counterpart with a color different from both the color of the central vertex and the color just used by Bob. The two colors used on this first pair then remain available on the two respective partite sets.

If Bob colors $b_1^i$ before the central path is complete, Alice responds on the path. This gives Alice one additional unpaired move on $P_m$, besides her initial move at $x_i$. All subsequent moves on the path again occur in Bob--Alice pairs. Since $m$ is even, Alice then makes the last move on $P_m$.

Otherwise, suppose that Bob does not color $b_1^i$ before the central path is completely colored. Then, apart from Alice's initial move at $x_i$, the moves on $P_m$ occur in Bob--Alice pairs. Since $m$ is even, Bob makes the last move on the path. Alice then responds by coloring $b_1^i$. This move is legal by the invariant above.

After the initial pair has been completely colored, the initially entered copy $Q_n^i$ is safe. If $c(b_1^i)\neq c_1$, then $c_1$ is available at every still uncolored vertex of $A_i$. If the color $c_3$ has already been introduced, then $c_3$ is available at every still uncolored vertex of $B_i$, so the coloring can be completed using these two colors. If $c_3$ has not yet been introduced, then all colored vertices of $A_i$ have color $c_1$. Since $c(b_1^i)\notin\{c_1,c_2\}$, the color $c(b_1^i)$ is available at every still uncolored vertex of $B_i$. Hence the copy can also be completed in this case.

It remains to consider the case $c(b_1^i)=c_1$. This can occur only if no previously colored neighbor of $b_1^i$ has received color $c_1$. If $c_3$ has not yet been introduced, the two colors different from $c_1$ and $c_2$ can be used as permanent colors on the two partite sets. If $c_3$ has already been introduced, let $c_4$ be the fourth color. Then $c_4$ is available at every still uncolored vertex of $A_i$, while $c_3$ is available at every still uncolored vertex of $B_i$. Hence the coloring of the initial copy can always be completed.

In either case, once the path is complete and the initial pair has been completely colored, it is Bob's turn. Every untouched copy is thus entered first by Bob, and every already entered copy is safe. Therefore, Alice wins the Bob-start connected coloring game with four colors, and
$\chi_{cg}^B(P_m\circ Q_n)\leq 4$.

To prove the reverse inequality, assume that the Bob-start connected coloring game is played with only three colors. Bob first colors $a_1^1$ with color $1$. By connectivity, Alice can respond either by coloring $x_1$ or by coloring a vertex $b_k^1$ with $k\neq1$.

If Alice colors $x_1$, Bob responds by coloring $b_1^1$ also with color $1$. Every remaining vertex of $A_1$ and $B_1$ is adjacent to $x_1$ and to a vertex of color $1$. Consequently, the two partite sets would both have to use the single remaining color, although there are edges between them. Thus, the coloring cannot be completed with only three colors.

If Alice colors $b_k^1$ for some $k\neq1$, then $a_1^1$ and $b_k^1$ have different colors. Since $n\geq3$, Bob can choose $\ell\notin\{1,k\}$ and color $a_\ell^1$ with the third color. This move is connected through $b_k^1$. Now the uncolored vertex $x_1$ has neighbors in all three colors and is therefore uncolorable. Hence $\chi_{cg}^B(P_m\circ Q_n)\geq4$,
and together with the upper bound, equality follows. This concludes the proof for even $m$.

From this point on, assume that $m$ is odd. We first prove the upper bound. Suppose that Alice and Bob play the Bob-start connected coloring game with a set of $n+1$ colors. If Bob starts in an attached copy $Q_n^i$, Alice immediately colors its central vertex $x_i$. Otherwise, if Bob starts on the path, no special first response is needed. Thereafter, every uncolored vertex of $P_m$ that is available has exactly one colored neighbor on the path, and its own attached copy is completely uncolored. Hence it has an available color. Every vertex in an attached copy has degree $n$ in the corona product, so it also has an available color whenever it is played. Therefore, the game can always be completed, and $\chi_{cg}^B(P_m\circ Q_n)\leq n+1$.

In order to prove the lower bound, assume that only $n$ colors are available in the Bob-start connected coloring game on $P_m\circ Q_n$. Bob first colors the vertex $x_1$. Whenever Alice colors a vertex of the unfinished central path and an uncolored vertex of $P_m$ remains, Bob responds by coloring an uncolored vertex of $P_m$ adjacent to the colored segment. Whenever Alice plays in a copy $Q_n^i$ whose central vertex is already colored, Bob colors the opposite vertex along the corresponding missing edge of the perfect matching with the same color. We justify that such a move is legal in exactly the same way as in the proof of Proposition~\ref{prop:path-Qn}. Moreover, this strategy keeps an even number of vertices colored in every entered copy. If, before such a copy is completely colored, there exists an uncolored vertex that cannot be colored, then Bob has already won.

Thus, moves outside the path occur in Alice--Bob pairs, and the moves restricted to $P_m$ alternate with Bob moving first. Since $m$ is odd, Bob colors the last vertex of $P_m$. At that moment, it is Alice's turn, every previously entered copy has an even number of uncolored vertices, and at least one copy of $Q_n$ is still completely uncolored. By repeatedly responding to Alice in any previously entered copy, Bob either wins there or makes the last move in that copy. Hence Alice is eventually forced to make the first move in an untouched copy $Q_n^j$ whose central vertex is already colored.

Bob now applies the following strategy. Whenever Alice colors one endpoint of a missing edge of the perfect matching $\mathcal{M}$, Bob responds by coloring its other endpoint with the same color. Distinct pairs corresponding to the edges of $\mathcal{M}$ must receive distinct colors, while the color of $x_j$ is forbidden at every vertex of $Q_n^j$. Therefore, $n$ distinct pair-colors in addition to $c(x_j)$ would be required, which is impossible if the game is played with only $n$ colors. This gives $\chi_{cg}^B(P_m\circ Q_n)\geq n+1$. Altogether, $\chi_{cg}^B(P_m\circ Q_n)=n+1$ if $m$ is odd.
\end{proof}

The two preceding propositions show that the identity of the starting player can change the required number of colors by an arbitrarily large amount in either direction.

\begin{theorem}
\label{thm:start-player-gap}
For every positive integer $k$, there exist connected graphs $G$ and $H$ such that
$\chi_{cg}(G)-\chi_{cg}^B(G)=k$
and
$\chi_{cg}^B(H)-\chi_{cg}(H)=k$.
\end{theorem}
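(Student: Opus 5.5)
We derive the statement directly from Propositions~\ref{prop:path-Qn} and~\ref{prop:Bob-path-Qn} by choosing the parameters $m$ and $n$ suitably. Both propositions cover all $m\geq 2$ and $n\geq 3$, and they give the following values:
\begin{center}
$\chi_{cg}(P_m\circ Q_n)=n+1$ and $\chi_{cg}^B(P_m\circ Q_n)=4$ for even $m$,
\end{center}
\begin{center}
$\chi_{cg}(P_m\circ Q_n)=3$ and $\chi_{cg}^B(P_m\circ Q_n)=n+1$ for odd $m\geq 3$.
\end{center}
All these graphs are connected, since the central path is connected and every attached copy is joined to its central vertex. This is needed because the connected game is only defined on connected graphs.

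For the first difference, take $G=P_2\circ Q_n$, or any even $m$, with $n=k+3$. Then $n\geq 4\geq 3$, and the two propositions give
\begin{center}
$\chi_{cg}(G)-\chi_{cg}^B(G)=(k+4)-4=k.$
\end{center}

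For the second difference, take $H=P_3\circ Q_n$, or any odd $m\geq 3$, with $n=k+2$. Then $n\geq 3$, and we get
\begin{center}
$\chi_{cg}^B(H)-\chi_{cg}(H)=(k+3)-3=k.$
\end{center}

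The only real check is that the parameters lie in the ranges where both propositions apply. Proposition~\ref{prop:Bob-path-Qn} requires $n\geq 3$, and Proposition~\ref{prop:path-Qn} requires $m,n\geq 2$. Since $k$ is a positive integer, both choices of $n$ satisfy these conditions. No new game analysis is required, so there is no substantive obstacle beyond this bookkeeping.
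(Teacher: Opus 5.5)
Your proposal is correct and matches the paper's proof: the paper also takes $G=P_m\circ Q_{k+3}$ with $m$ even and $H=P_m\circ Q_{k+2}$ with $m\geq 3$ odd, and reads off the values from Propositions~\ref{prop:path-Qn} and~\ref{prop:Bob-path-Qn}. The only bookkeeping needed is your check that $n\geq 3$ holds and that $m\geq 3$ for the odd case.
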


\begin{proof}
Let $k$ be a positive integer. For any even $m\geq 2$, set $G=P_m\circ Q_{k+3}$. By Propositions~\ref{prop:path-Qn} and~\ref{prop:Bob-path-Qn}, we have $\chi_{cg}(G)=k+4$ and $\chi_{cg}^B(G)=4$, which implies $\chi_{cg}(G)-\chi_{cg}^B(G)=k+4-4=k$.

For any odd $m\geq 3$, set $H=P_m\circ Q_{k+2}$. Propositions~\ref{prop:path-Qn} and~\ref{prop:Bob-path-Qn} imply that $\chi_{cg}(H)=3$ and $\chi_{cg}^B(H)=k+3$. Therefore, $\chi_{cg}^B(H)-\chi_{cg}(H)=k+3-3=k$.
\end{proof}

\subsection{Subgraphs and comparison of the connected game chromatic number of a corona product with those of its factors}

In order to show that the connected game chromatic number of a connected subgraph can be arbitrarily larger than that of the whole graph, we first prove the following lemma.

\begin{lemma}
\label{lem:P2-Knn}
For every $n\geq 2$, $\chi_{cg}(P_2\circ K_{n,n})=4$.
\end{lemma}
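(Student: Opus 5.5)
The proof has two parts: a lower bound showing that Bob wins the connected game on $P_2\circ K_{n,n}$ with three colors, and an upper bound giving an explicit winning strategy for Alice with four colors. Two colors are clearly insufficient because the corona contains a triangle. Note that Theorem~\ref{thm:general-degree-bound} only gives the bound $n+2$, so the upper bound needs a genuine strategy. Denote the central vertices by $x_1,x_2$, and let $A_i,B_i$ be the partite sets of the copy $K_{n,n}^i$ attached to $x_i$.

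\emph{Upper bound (four colors).} The starting observation concerns a copy $K_{n,n}^i$ whose centre $x_i$ is colored. A vertex of $A_i$ can become uncolorable only if $B_i$ carries all three colors different from $c(x_i)$. If some vertex of $A_i$ is already colored with a color $c\neq c(x_i)$, then the colors on $B_i$ avoid both $c$ and $c(x_i)$, so $B_i$ uses at most two colors, and $c$ stays available on all of $A_i$. The same holds with the roles of $A_i$ and $B_i$ exchanged. Hence a copy is permanently safe as soon as both of its sides contain a colored vertex. Central vertices are never in danger, since $x_i$ has only the neighbour $x_{3-i}$ colored before it is played.

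Alice opens at $x_1$ and maintains the following invariant after each of her moves: no copy has one side empty while the other side carries two distinct colors. A single move of Bob can break this in at most one copy. Alice repairs it by coloring a vertex on the empty side of that copy. This vertex sees $c(x_i)$ and two colors, so exactly one color remains available. When nothing needs repair, she makes a harmless move, for instance a central vertex, or a vertex on a one-sided copy's empty side, or a vertex reusing an existing color on its own side. Since no copy ever reaches one side empty and three colors on the other, Alice wins with four colors.

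\emph{Lower bound (three colors).} Here Bob exploits connectivity and tempo. The key fact is as follows. Suppose $x_i$ is colored, only two colors are available in the copy, and Alice is the first to play there, say a vertex of $A_i$ with color $c$. Then Bob colors another vertex of $A_i$ with the other available color, which requires $n\ge 2$. Every vertex of $B_i$ is now uncolorable.

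Case~1: Alice starts at $x_1$ with color $1$. Bob answers $x_2$ with color $2$. The central path is complete, so Alice must enter a copy first, and Bob wins by the key fact.

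Case~2: Alice starts in a copy, say $a_1\in A_1$ with color $1$. Bob answers $b_1$ with color $2$, so $x_1$ can now only receive color $3$.
\begin{itemize}
\item If Alice plays another vertex of the copy, Bob adds the missing third color on the opposite side, which is always legal. Then $x_1$ sees all three colors and is uncolorable.
\item If Alice colors $x_1=3$, Bob colors $x_2$ with $1$ or $2$. The untouched copy $K_{n,n}^2$ then has only two available colors, and $K_{n,n}^1$ has an even number $2n-2$ of uncolored vertices. Bob answers each of Alice's moves in $K_{n,n}^1$ with a legal move there, or has already won. Eventually Alice must open $K_{n,n}^2$, and the key fact applies.
\end{itemize}

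I expect the main obstacle to be verifying this parity and mirroring argument: Bob must always have a legal reply in $K_{n,n}^1$ until it is exhausted. This holds because that copy has its two sides colored $1$ and $2$ with centre $3$, so it is fully determined, and Bob can always reply on it.
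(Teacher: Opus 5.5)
Your proof is correct. The upper bound is the paper's argument in a cleaner form. The paper also uses the fact that a copy is permanently safe once both partite sets contain a colored vertex, and it has Alice enter the empty side (with the fourth color if Bob has put two colors on one side). The paper, however, first makes sure that both central vertices are colored by Alice's second move. Your invariant instead makes explicit why several simultaneously one-sided copies are harmless: after each of Alice's moves no copy has an empty side opposite a two-colored side, and a single move of Bob can break this in at most one copy.

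Case~1 of your lower bound is exactly the paper's. Case~2 takes a different route. The paper's Bob ignores colors there. He answers every move of Alice in the entered copy with an uncolored vertex of the opposite partite set. Parity then shows that, whenever Alice finally colors $x_i$ (or the copy is exhausted), he can take the other central vertex and still make Alice open the fresh copy. You instead choose Bob's first color so that $x_1$ is pinned to the third color. Alice must then color $x_1$ at once, and afterwards $K_{n,n}^1$ is completely forced ($A_1$ receives $1$, $B_1$ receives $2$). So Bob's replies there are automatically legal, and the count of $2n-2$ remaining vertices settles the tempo. Your version is shorter and fully explicit, while the paper's tempo strategy mirrors the pairing arguments used elsewhere, e.g.\ in Proposition~\ref{prop:path-Qn}.

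Two wording points. First, if Alice herself uses color $3$ inside the copy, $x_1$ is already uncolorable, so there is no ``missing'' third color for Bob to add. Second, your list of harmless moves should include entering an untouched copy whose centre is colored. This is sometimes Alice's only option, and it is harmless because it puts a single color on one side.
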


\begin{proof}
Denote the central vertices of $P_2\circ K_{n,n}$ by $x_1$ and $x_2$, and let the copy $K_{n,n}^i$ attached to $x_i$ have partite sets $A_i$ and $B_i$.

We first show that Bob wins the connected coloring game on $P_2\circ K_{n,n}$ if the game is played with three colors. First consider the case when Alice starts the game at $x_i$. Bob responds by coloring the other central vertex. Both attached copies are then completely uncolored. Hence Alice must be the first player to enter one of them, say $K_{n,n}^j$. Assume that she colors $v\in A_j$. Bob colors a different vertex $u\in A_j$ with the third color, namely the color different from $c(x_j)$ and $c(v)$. Such a vertex exists since $n\geq 2$. Every vertex of $B_j$ is adjacent to $x_j$, $u$, and $v$, which have three distinct colors. Thus no vertex of $B_j$ can be colored.

Next suppose that Alice starts the game in a copy $K_{n,n}^i$ while $x_i$ is still uncolored. Before $x_i$ is colored, Bob responds to each move of Alice in this copy by coloring an uncolored vertex in the opposite partite set. Such a vertex exists because, after each response of Bob, the same number of vertices has been colored in the two partite sets. Moreover, Bob's move satisfies the connectivity condition, since his chosen vertex is adjacent to the vertex just colored by Alice. If no available color exists for such a response, then an uncolored vertex is already uncolorable, and Bob has already won.

Suppose first that Alice colors $x_i$ before the copy is completely colored. Bob responds by coloring the other central vertex $x_j$. If Alice enters the still completely uncolored copy $K_{n,n}^j$, Bob wins by the strategy described in the preceding paragraph. Otherwise, Bob continues to respond to Alice's moves in $K_{n,n}^i$ by coloring a vertex in the opposite partite set. Since the number of uncolored vertices remaining in $K_{n,n}^i$ is even, Bob makes the last move in this copy. Consequently, Alice is forced to begin coloring $K_{n,n}^j$, and Bob again wins by the strategy described above.

Finally, suppose that Alice does not color $x_i$ before the initially entered copy is completely colored. Then Bob makes the last move in that copy. Alice must color $x_i$ on her next move, after which Bob colors $x_j$. Alice is then forced to enter the completely uncolored copy $K_{n,n}^j$, and the same argument applies. Therefore,
$\chi_{cg}(P_2\circ K_{n,n})\geq4$.

For the upper bound, suppose that the connected coloring game on $P_2\circ K_{n,n}$ is played with four colors. Alice makes her first move at $x_1$. If Bob colors $x_2$, Alice enters either of the two attached copies. Otherwise, by the connectivity requirement, Bob must play in $K_{n,n}^1$, and Alice responds by coloring $x_2$. Thus, after Alice's second move, both central vertices are colored.

From this point on, Alice uses the following local strategy. Consider a copy $K_{n,n}^i$ whose central vertex $x_i$ is already colored. As soon as both partite sets contain a colored vertex, say $a\in A_i$ and $b\in B_i$, the copy is safe. Indeed, $x_i,a,b$ form a triangle, and hence their colors are pairwise distinct. Since every vertex of $B_i$ is adjacent to $a$, no vertex of $B_i$ can subsequently receive the color $c(a)$. Moreover, $c(a)\neq c(x_i)$. Consequently, the color $c(a)$ remains available at every uncolored vertex of $A_i$. Analogously, $c(b)$ remains available at every uncolored vertex of $B_i$. Hence no vertex of this copy can ever become uncolorable.

It therefore remains to consider a copy in which only one of the two partite sets, say $A_i$, contains colored vertices. If it is Alice's turn, she colors a vertex of $B_i$ with a color different from $c(x_i)$ and from all colors currently used on $A_i$. If Bob instead colors a vertex of $B_i$, the copy immediately becomes safe.

The only remaining possibility is that Bob colors another vertex of $A_i$ before any vertex of $B_i$ has been colored. At this moment, $A_i$ contains at most two colored vertices: the first colored vertex of the copy and the vertex colored by Bob in his preceding move. Therefore, at most three colors are forbidden at a vertex of $B_i$, namely $c(x_i)$ and the colors used on these two vertices. Alice can thus color any vertex of $B_i$ with the remaining fourth color, making the copy safe.

Alice applies this strategy whenever one of the attached copies is not yet safe. An unfinished copy in which only one partite set has been entered cannot become dangerous while play takes place elsewhere, since no additional colors are introduced into that copy. Moreover, if Bob plays again in its already entered partite set, that set contains at most two colored vertices before Alice can respond by entering the opposite partite set. Once both partite sets have been entered, the permanent available colors described above ensure that no vertex of the copy can become uncolorable.

Therefore Alice can complete the coloring of the whole graph using four colors. Hence $\chi_{cg}(P_2\circ K_{n,n})\leq 4$.
\end{proof}

\begin{theorem}
\label{thm:connected-subgraph-gap}
For every positive integer $k$, there exist a connected graph $G$ and a connected subgraph $H$ of $G$ such that
$\chi_{cg}(H)-\chi_{cg}(G)\geq k$.
\end{theorem}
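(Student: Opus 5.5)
We take $H=P_m\circ Q_n$ with $m$ even and $G$ obtained from $H$ by adding edges, so that $H$ is a spanning, hence connected, subgraph of $G$. Proposition~\ref{prop:path-Qn} gives $\chi_{cg}(H)=n+1$. It then suffices to find a connected supergraph $G$ with $\chi_{cg}(G)$ bounded by a constant independent of $n$. Lemma~\ref{lem:P2-Knn} shows that completing each $Q_n$ to $K_{n,n}$ removes Bob's mirroring along the missing matching and brings the value down to $4$ when $m=2$. So the concrete choice is $m=2$, $H=P_2\circ Q_n$, and $G=P_2\circ K_{n,n}$. Since $Q_n^i\subseteq K_{n,n}^i$ on the same vertex set, $H$ is a spanning subgraph of $G$ and is connected: the central vertex $x_i$ is joined to every vertex of its copy, so $H$ is connected even when $n=2$.

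The key steps, in order, are as follows.

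\begin{enumerate}
\item Note that $V(H)=V(G)$ and $E(H)\subseteq E(G)$, and that $H$ is connected.
\item Invoke Proposition~\ref{prop:path-Qn} with $m=2$ (which is even) to get $\chi_{cg}(H)=n+1$.
\item Invoke Lemma~\ref{lem:P2-Knn} to get $\chi_{cg}(G)=4$.
\item Conclude that $\chi_{cg}(H)-\chi_{cg}(G)=n-3$.
\end{enumerate}

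Given a positive integer $k$, choose $n=k+3$, which satisfies $n\geq 2$ as both cited results require. This gives $\chi_{cg}(H)-\chi_{cg}(G)=k$, and in particular the difference is at least $k$.

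There is no real obstacle, since both values are already computed earlier in the paper. The only point to check is that the hypotheses match: Proposition~\ref{prop:path-Qn} needs $m,n\geq2$ and $m$ even, and Lemma~\ref{lem:P2-Knn} needs $n\geq 2$. One should also note that the statement concerns subgraphs in general, not induced subgraphs. The subgraph $H$ here is spanning but not induced, which is allowed. If an induced example were wanted instead, more work would be needed, but the statement does not require it.
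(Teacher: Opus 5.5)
Your proposal is correct and is exactly the paper's proof. It takes $H=P_2\circ Q_{k+3}$ as a (spanning) connected subgraph of $G=P_2\circ K_{k+3,k+3}$, and reads off $\chi_{cg}(G)=4$ from Lemma~\ref{lem:P2-Knn} and $\chi_{cg}(H)=k+4$ from Proposition~\ref{prop:path-Qn} with $m=2$ even.
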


\begin{proof}
Let $G=P_2\circ K_{k+3,k+3}$, and let $H=P_2\circ Q_{k+3}$. Clearly $H$ is a connected subgraph of $G$. By Lemma~\ref{lem:P2-Knn}, $\chi_{cg}(G)=4$, while Proposition~\ref{prop:path-Qn} implies that $\chi_{cg}(H)=k+3+1=k+4$. We conclude that $\chi_{cg}(H)-\chi_{cg}(G)=k$.
\end{proof}

The same family also shows that the connected game chromatic number of a corona product cannot be bounded in terms of the connected game chromatic numbers of its factors.

\begin{theorem}
\label{thm:factors-two-unbounded}
For every positive integer $k$, there exist connected graphs $G$ and $H$ such that
$\chi_{cg}(G)=\chi_{cg}(H)=2$
and
$\chi_{cg}(G\circ H)-\max\{\chi_{cg}(G),\chi_{cg}(H)\}\geq k$.
\end{theorem}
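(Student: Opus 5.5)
The plan is to reuse the family $P_m\circ Q_n$ from Proposition~\ref{prop:path-Qn}. For a given positive integer $k$ we take $G=P_2$ and $H=Q_{k+3}$. Proposition~\ref{prop:path-Qn} with $m=2$ (even) and $n=k+3$ then gives
$$\chi_{cg}(G\circ H)=\chi_{cg}(P_2\circ Q_{k+3})=k+4.$$

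It remains to check that both factors have connected game chromatic number $2$. For this we use the result of \cite{charpentier-2020}: every nontrivial connected bipartite graph has $\chi_{cg}=2$.
\begin{itemize}
\item The path $P_2$ is a nontrivial connected bipartite graph, so $\chi_{cg}(P_2)=2$. This is also immediate by hand, since $P_2$ is a single edge.
\item The graph $Q_{k+3}$ is bipartite as a subgraph of $K_{k+3,k+3}$. It is connected whenever $n\ge 3$. Indeed, any two vertices $a_i,a_j$ of $A$ have a common neighbor $b_\ell$ with $\ell\notin\{i,j\}$, and every $b_j$ is adjacent to some $a_i$ with $i\neq j$. Since $k\ge1$ gives $n=k+3\ge 4$, the graph $Q_{k+3}$ is connected, and hence $\chi_{cg}(Q_{k+3})=2$.
\end{itemize}

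Combining these, $\max\{\chi_{cg}(G),\chi_{cg}(H)\}=2$, and the difference is $(k+4)-2=k+2\ge k$. The only point needing care is the connectivity of $Q_n$. This is why the case $n=2$ must be avoided: $Q_2$ is a disjoint union of two edges, and the theorem requires connected factors.

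If one wanted the difference to equal exactly $k$, one would instead take $n=k+1$. This is valid only for $k\ge2$, so it is simpler to use $n=k+3$ and settle for the inequality stated in the theorem. With the earlier propositions in hand, no step is a real obstacle.
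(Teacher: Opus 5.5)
Your proposal is correct and follows the paper's proof essentially verbatim. Like the paper, you take $G=P_2$ and $H=Q_n$, get $\chi_{cg}=2$ for both factors from the bipartite theorem of Charpentier et al., and apply Proposition~\ref{prop:path-Qn} to the corona. The only differences are that the paper uses $n=k+2$ (difference $k+1$) where you use $n=k+3$ (difference $k+2$), and that you explicitly verify the connectivity of $Q_n$ for $n\ge 3$, which the paper leaves implicit.
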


\begin{proof}
\cite{charpentier-2020} proved that the connected game chromatic number of every nontrivial connected bipartite graph equals $2$. Set $G=P_2$, and let $H=Q_{k+2}$. Since $k+2\geq 3$, both $G$ and $H$ are connected bipartite graphs, so we obtain $\chi_{cg}(G)=\chi_{cg}(H)=2$.

By Proposition~\ref{prop:path-Qn}, $\chi_{cg}(G\circ H)=(k+2)+1=k+3$. Thus $\chi_{cg}(G\circ H)-\max\{\chi_{cg}(G),\chi_{cg}(H)\}=k+1\geq k$.
\end{proof}

\subsection{Comparison with the ordinary coloring game}

We first recover, within the class of corona products, the phenomenon that the ordinary game chromatic number may exceed the connected game chromatic number by an arbitrarily large amount.

\begin{lemma}
\label{lem:Qn-K1-game}
Let $n\geq 3$ and let $Q_n=K_{n,n}-\mathcal M$, where $\mathcal M$ is a perfect matching. Then $\chi_g(Q_n\circ K_1)\geq n$.
\end{lemma}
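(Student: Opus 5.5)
The plan is to show that Bob wins the ordinary $k$-coloring game on $Q_n\circ K_1$ for every $k\leq n-1$, using a \emph{mirror strategy} given by an involutive automorphism of the graph. Denote the pendant vertex attached to $a_j$ by $a_j'$ and the one attached to $b_j$ by $b_j'$. Let $\sigma$ be the map that swaps $a_j\leftrightarrow b_j$ and $a_j'\leftrightarrow b_j'$ for every $j$. Since the missing edges of $\mathcal M$ are exactly the pairs $a_jb_j$, this $\sigma$ is an automorphism of $Q_n\circ K_1$. Moreover, $\sigma$ has no fixed points, and no vertex is adjacent to its image. For the pairs $a_j,b_j$ this holds because $a_jb_j\notin E(Q_n)$; for the leaves it holds because $a_j'$ and $b_j'$ are pendant vertices of distinct vertices.

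Bob's strategy is as follows: whenever Alice colors a vertex $v$ with color $c$, Bob colors $\sigma(v)$ with the same color $c$. I will verify by induction that after each of Bob's moves the partial coloring is $\sigma$-invariant, i.e. $c(\sigma(x))=c(x)$ for every colored $x$, and the uncolored set is $\sigma$-invariant. Assume this holds before Alice's move at $v$.
\begin{itemize}
\item $\sigma(v)$ is still uncolored, since $v$ was uncolored and the uncolored set is $\sigma$-invariant, and $\sigma(v)\neq v$.
\item $\sigma(v)$ is not adjacent to $v$, so Alice's color $c$ at $v$ does not conflict with $\sigma(v)$.
\item Every other colored neighbor of $\sigma(v)$ has the form $\sigma(w)$ for a colored neighbor $w$ of $v$, and $c(\sigma(w))=c(w)\neq c$.
\end{itemize}
Hence Bob's move is legal and invariance is restored. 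In particular, Bob is never stuck while the game continues.

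Now suppose that Alice could win, i.e. the final coloring is proper and total. By invariance, each pair $\{a_j,b_j\}$ receives a single common color $\gamma_j$. For $i\neq j$, the vertex $a_i$ is adjacent to $b_j$, so $\gamma_i\neq\gamma_j$. Thus the $n$ pairs would need $n$ distinct colors, which is impossible with $k\leq n-1$ colors. Hence the game must stop with some uncolored vertex having no available color, and Bob wins. Pendant vertices are not where the obstruction arises: with $k\geq 2$ colors a leaf is always colorable. The obstruction must therefore appear at some $a_j$ or $b_j$, but the argument does not need to identify where. This gives $\chi_g(Q_n\circ K_1)\geq n$.

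The only delicate point is checking that the mirror move is always legal. This rests entirely on $\sigma$ being a fixed-point-free automorphism with $v\sigma(v)\notin E$ for all $v$, which I expect to be the step requiring care. Attaching the leaves to both $a_j$ and $b_j$ preserves this property, so the classical mirror argument for $K_{n,n}-\mathcal M$ carries over unchanged.
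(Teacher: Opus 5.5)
Your proof is correct and is essentially the paper's argument: Bob pairs each $a_j$ with $b_j$ and each $a_j'$ with $b_j'$ and answers every move of Alice by giving her color to the partner vertex, so the $n$ monochromatic central pairs would need $n$ distinct colors. The only difference is presentational. You verify legality uniformly through the fixed-point-free automorphism $\sigma$ with $v\sigma(v)\notin E$, whereas the paper checks central and pendant pairs case by case. Your observation that $\sigma(v)$ is still uncolored after each of Alice's moves also quietly ensures that any completed coloring arises after Bob's move and is therefore $\sigma$-invariant.
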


\begin{proof}
Let the partite sets of $Q_n$ be $A=\{x_1,\ldots,x_n\}$ and $B=\{y_1,\ldots,y_n\}$, where $x_i y_i\notin E(Q_n)$ for every $i$. Furthermore, let $x_i'$ and $y_i'$ denote the leaves attached to $x_i$ and $y_i$, respectively, in the corona $Q_n\circ K_1$.

Suppose that the ordinary coloring game is played with at most $n-1$ colors. Bob uses the following pairing strategy. For every $i$, he pairs $x_i$ with $y_i$ and $x_i'$ with $y_i'$. Whenever Alice colors the first vertex of one of these pairs, Bob responds by coloring the other vertex of the same pair with the same color.

We first show that Bob's response is always legal. Consider a central pair ${x_i,y_i}$ and suppose, without loss of generality, that Alice colors $x_i$. The vertices $x_i$ and $y_i$ are nonadjacent. Moreover, every previously colored central pair ${x_j,y_j}$ is monochromatic by Bob's strategy. Hence, if Alice's color on $x_i$ were already used on a colored central neighbor $x_j$ of $y_i$, then the paired vertex $y_j$ would have the same color. Since $i\neq j$, the vertex $y_j$ is adjacent to $x_i$, contradicting the legality of Alice's move.

It remains to consider the leaf $y_i'$, which is the only neighbor of $y_i$ outside $Q_n$. If $y_i'$ has already been colored, then $x_i'$ has also been colored, with the same color, by Bob's pairing strategy. Since $x_i'$ is adjacent to $x_i$, the color chosen by Alice for $x_i$ differs from the color of $x_i'$, and hence also from the color of $y_i'$. Thus Bob may legally color $y_i$ with the same color as $x_i$. The case in which Alice first colors $y_i$ is symmetric.

Now consider a pendant pair ${x_i',y_i'}$. Suppose, without loss of generality, that Alice colors $x_i'$. If neither $x_i$ nor $y_i$ has been colored, Bob's response at $y_i'$ is clearly legal. Otherwise, both $x_i$ and $y_i$ have already been colored and, by the pairing strategy, have the same color. Since Alice's move at $x_i'$ is legal, its color differs from $c(x_i)=c(y_i)$, and therefore Bob may use the same color on $y_i'$. Hence Bob can maintain the pairing strategy throughout the game.

In particular, for every $i$, the two central vertices $x_i$ and $y_i$ receive the same color. Moreover, distinct central pairs must receive distinct colors. Indeed, if $i\neq j$ and the pairs ${x_i,y_i}$ and ${x_j,y_j}$ received the same color, then the adjacent vertices $x_i$ and $y_j$ would have the same color.

Consequently, after $n-1$ central pairs have been colored, they use all of the available $n-1$ colors. Let ${x_i,y_i}$ be the remaining uncolored central pair. The vertex $x_i$ is adjacent to $y_j$ for every $j\neq i$, and these vertices carry all $n-1$ available colors. Hence $x_i$ is uncolorable. By symmetry, the same is true for $y_i$. Thus Bob wins the game with $n-1$ colors, and therefore $\chi_g(Q_n\circ K_1)\geq n$.
\end{proof}

Recall that \cite{charpentier-2020} proved that the connected game chromatic number of every connected bipartite graph equals $2$. Since $Q_n\circ K_1$ is connected and bipartite, we have $\chi_{cg}(Q_n\circ K_1)=2$. This immediately yields the following result.

\begin{theorem}
\label{thm:ordinary-connected-gap}
For every positive integer $k$, there exists a corona product graph $G$ such that $\chi_g(G)-\chi_{cg}(G)\geq k$.
\end{theorem}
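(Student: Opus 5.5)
The plan is to take $G=Q_n\circ K_1$ with $n=k+2$, which gives $n\geq 3$ for every positive integer $k$. This graph is a corona product of the connected graphs $Q_n$ and $K_1$. For $n\geq 3$, the graph $Q_n=K_{n,n}-\mathcal M$ is connected: any two vertices on the same side have a common neighbour on the other side, since $n-2\geq 1$, and every vertex has a neighbour on the opposite side. Attaching a pendant leaf to each vertex keeps the graph connected and bipartite. Concretely, each leaf $x_i'$ is put into the partite set opposite to $x_i$, so $G$ is bipartite with parts $A\cup\{y_i'\}$ and $B\cup\{x_i'\}$.

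Two facts are then combined. First, by the result of \cite{charpentier-2020} that every nontrivial connected bipartite graph has connected game chromatic number $2$, we get $\chi_{cg}(G)=2$. Second, Lemma~\ref{lem:Qn-K1-game} gives $\chi_g(G)\geq n=k+2$. Hence
\[
\chi_g(G)-\chi_{cg}(G)\geq (k+2)-2=k,
\]
which is the claim.

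There is essentially no obstacle. The only point that needs a little care is the choice $n=k+2$: it is large enough to reach the gap $k$, and it also satisfies the hypothesis $n\geq 3$ of Lemma~\ref{lem:Qn-K1-game}, which is needed for $Q_n$, and hence $G$, to be connected. For $n=2$, $Q_2$ is a perfect matching and the connected game would not even be defined on it. Everything else follows directly from the previous lemma and the bipartite result.
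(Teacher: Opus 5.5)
Your proposal is correct and matches the paper's proof: both take $G=Q_{k+2}\circ K_1$, obtain $\chi_{cg}(G)=2$ from the bipartite result of Charpentier et al., and obtain $\chi_g(G)\geq k+2$ from Lemma~\ref{lem:Qn-K1-game}. The one addition is your explicit check that $Q_n$ is connected for $n\geq 3$ and that $G$ is bipartite with the leaves on the opposite side, which the paper only asserts.
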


\begin{proof}
Take $G=Q_{k+2}\circ K_1$. Since $k+2\geq 3$, the graph $Q_{k+2}$, and hence also $G$, is connected and bipartite. Therefore $\chi_{cg}(G)=2$. On the other hand, Lemma~\ref{lem:Qn-K1-game} gives $\chi_g(G)\geq k+2$. Consequently,
$\chi_g(G)-\chi_{cg}(G)\geq k$.
\end{proof}

\cite{hollom-2024} showed that the connectivity restriction may also favor Bob by constructing a graph $G$ with $\chi_g(G)=4$ and $\chi_{cg}(G)=5$. We now extend this phenomenon to an infinite family.

\begin{theorem}
\label{thm:hollom-family}
There exist infinitely many pairwise nonisomorphic connected graphs $G_k$, $k\geq 0$, such that $\chi_g(G_k)<\chi_{cg}(G_k)$.
\end{theorem}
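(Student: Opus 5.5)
We would build the family directly from Hollom's graph. Let $G_0$ be the connected graph of \cite{hollom-2024} with $\chi_g(G_0)=4$ and $\chi_{cg}(G_0)=5$. For $k\geq 0$ we attach to $G_0$ a gadget of size growing with $k$, chosen so that it does not interfere with either game. The natural choice is to glue pendant structures at a fixed vertex $r$ of $G_0$. We would attach $2k$ new pendant vertices (leaves) to $r$, or, if leaves at $r$ cause problems, a pendant path of length $2k$ at $r$. Call the result $G_k$. The graphs are pairwise nonisomorphic because $|V(G_k)|=|V(G_0)|+2k$. Using an even number of added vertices is deliberate: Bob's parity-based strategy in the connected game on $G_0$ should survive, and Alice's strategy in the ordinary game should not lose tempo.

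\textbf{Lower bound $\chi_{cg}(G_k)\geq 5$.} We would show that Bob wins the connected $4$-coloring game on $G_k$. Bob follows his winning strategy on $G_0$. Whenever Alice colors a new pendant vertex (a leaf at $r$, or the next vertex of the pendant path), Bob answers with another new pendant vertex. Such a vertex is legal and connected: once $r$ is colored, every leaf at $r$ is playable and has only one colored neighbour, so three colors remain. Because there are an even number of new vertices, these moves pair off and the alternation inside $G_0$ is unchanged. Play restricted to $G_0$ is therefore a legal connected game on $G_0$ with Alice moving first, and Bob wins it.

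There is one subtlety. Alice might start the game inside the gadget. With leaves only, any first move on a leaf forces the next connected move to be $r$, so the game effectively starts at $r$. Bob's strategy must then handle the case where Bob himself colors $r$ first. If Hollom's strategy does not cover this, we would instead attach the gadget at a vertex through which Bob's strategy is robust, or use pendant $P_2$'s hanging from $r$.

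\textbf{Upper bound $\chi_g(G_k)\leq 4$.} For the ordinary game with $4$ colors, Alice follows her winning strategy on $G_0$. When Bob plays in the gadget, Alice answers with a gadget vertex. Any leaf has degree $1$ and can never become uncolorable with $4$ colors. A pendant path has maximum degree $2$, so its vertices are safe as well. Vertex $r$ gains extra neighbours, which could hurt it. To protect $r$, Alice's reply in the gadget should be chosen so that gadget colors near $r$ repeat a color already present. Alternatively, Alice can treat gadget moves as passes, which requires her $G_0$ strategy to tolerate Bob passing.

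This is the main obstacle. Hollom's strategy is a specific one, and whether it tolerates passes or extra constraints at $r$ must be checked against its details. To avoid that dependence, we would try first to attach the gadget through a new vertex $s$ adjacent to $r$, with the leaves hanging on $s$, so that $r$ gains only one new neighbour. Then $\chi_g(G_k)<\chi_{cg}(G_k)$ follows from the two bounds.
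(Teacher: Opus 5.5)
Your proposal has a genuine gap in the ordinary-game half, $\chi_g(G_k)\le 4$. You flag it yourself as ``the main obstacle'' but do not resolve it, and none of your suggested remedies is shown to work.

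\begin{itemize}
\item Treating gadget moves as passes requires Hollom's strategy to tolerate passes by Bob, which is not known.
\item Repeating colors on leaves at an uncolored $r$ fails. Each of Bob's gadget moves can introduce a new color at a neighbour of $r$, so for $k\ge 4$ the vertex $r$ dies after four such moves. Alice cannot leave the gadget to color $r$ without breaking the tempo of her $G_0$ strategy.
\item Hanging the leaves on a new vertex $s$ adjacent to $r$ does not help. Bob can still color $s$ before $r$. This either raises $d(r)$ to $5$ when $d_{G_0}(r)=4$, or forbids a color that Alice's $G_0$ strategy may need at $r$.
\end{itemize}

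The missing idea is to choose the attachment vertex to be the vertex where Alice's winning ordinary $4$-coloring strategy on $G_0$ begins; the paper uses Hollom's vertex $v$ labelled $1$. No further details of Hollom's strategy are needed, since every winning strategy has a first move. Then $r$ is colored on move one, and afterwards the gadget is completely decoupled from $G_0$. Alice answers each of Bob's gadget moves with its partner in a fixed pairing; for pendant paths $rx_iy_i$, she colors the other vertex of the same path.

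Your construction is also not pinned down, and the choice matters for the connected lower bound. With $2k$ leaves at $r$ the argument breaks exactly as you suspect: if Alice opens on a leaf, Bob is forced to open $G_0$ at $r$, and an odd number of leaves remains. Your fallback, pendant paths $rx_iy_i$, is precisely the paper's gadget, and it fixes this.

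\begin{itemize}
\item If Alice opens on a path, Bob colors the other vertex of that path. Alice is then forced to color $r$, so play in $G_0$ is the Alice-start connected game, and Bob's strategy applies; the color of $x_i$ is irrelevant by symmetry of colors.
\item If Alice opens in $G_0$, she can enter a path only at $x_i$, and Bob answers at $y_i$.
\end{itemize}

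With $r=v$ and this gadget, both bounds go through as in the paper. Finally, because the connected game is non-monotone, you must say why Alice also loses with fewer than $4$ colors before concluding $\chi_{cg}(G_k)\ge 5$. For example, note that $G_0$ contains $K_4$.
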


\begin{proof}
Let $G$ be the graph constructed by Hollom and presented in Figure~2 of \cite{hollom-2024}, and denote by $v$ the vertex with label $1$ in that figure. Let $w$ denote the other vertex of degree $5$ in $G$, that is, the unlabeled vertex in the lower left part of Figure~2. Notice that every vertex of $G$ other than $v$ and $w$ has degree at most $4$. For $k\geq 0$, let $G_k$ be the graph obtained from $G$ by adding $k$ paths $vx_iy_i$, for $1\leq i\leq k$, where $x_i,y_i$ are new vertices. In particular, $G_0=G$.

First let Alice and Bob play the ordinary coloring game on $G_k$ when four colors are available. Hollom showed that Alice has a winning strategy in the game on $G$ played with four colors if she starts the game at $v$. Therefore, Alice first plays $v$ and follows her winning strategy on $G$. Whenever Bob colors a vertex in any of the added paths, Alice responds by coloring the other uncolored vertex of that path. Since that vertex has at most two colored neighbors, Alice's response is always legal. Such a Bob--Alice pair of moves on an added path does not change the position or the player to move in the game restricted to $G$. Therefore, Alice can continue with her winning strategy on $G$. Consequently, $\chi_g(G_k)\leq4$. Since $G$ contains $K_4$ as a subgraph, $K_4$ is also a subgraph of $G_k$, and thus $\chi_g(G_k)\geq4$.

Next consider the connected coloring game played with four colors on $G_k$ for any $k\geq0$. Since $\chi_{cg}(G)=5$, Bob has a winning strategy in the game played with four colors on $G$. Suppose first that Alice starts the game in $G$. Whenever Alice plays a vertex on an added path, she must first color $x_i$ because of the connectivity requirement of the game, and Bob responds by coloring $y_i$. Again, such an Alice--Bob pair of moves does not affect the colored subgraph of $G$ or the player to move there, so Bob can continue with his strategy restricted to $G$.

Otherwise, suppose that Alice makes the first move of the game in an added path. Bob responds by coloring the other vertex of the same path. Alice is then forced to color $v$ on her next move because of the connectivity condition. Indeed, $v$ is the only vertex outside the path adjacent to the colored set. Then it is Bob's turn, and the game restricted to $G$ is precisely the connected four-color game in which Alice has started at $v$. Therefore, Bob can again use his winning strategy on $G$. We conclude that Bob wins the game played with four colors and $\chi_{cg}(G_k)\geq5$.

Finally, we show that Alice has a winning strategy in the connected coloring game on $G_k$ with five colors. She starts by coloring $v$. If Bob colors $w$ on his first move, Alice makes any legal move. Otherwise, Alice colors $w$ on her second move. This move is legal, since $w$ is adjacent to $v$ and at this moment has at most two colored neighbors.

Once both $v$ and $w$ are colored, every uncolored vertex of $G_k$ has degree at most $4$. Indeed, every other vertex of $G$ has degree at most $4$, while the newly added vertices $x_i$ and $y_i$ have degrees $2$ and $1$, respectively. Hence, with five colors available, every uncolored playable vertex has an available color. Since $G_k$ is connected, as long as an uncolored vertex remains, at least one uncolored vertex is playable. Therefore, the game can always be completed, and $\chi_{cg}(G_k)\leq5$.

We conclude that $\chi_g(G_k)=4$ and $\chi_{cg}(G_k)=5$ for every $k\geq0$. In addition, the graphs $G_k$ are pairwise nonisomorphic since $|V(G_k)|=|V(G)|+2k$.
\end{proof}

\begin{figure}[ht]
\centering
\begin{tikzpicture}[
    vertex/.style={circle,draw,minimum size=7mm,inner sep=0pt}
]

% Hollom's graph G
\node[vertex] (a) at (0,2.2) {};
\node[vertex] (b) at (0,0.9) {};
\node[vertex] (w) at (-0.8,-0.2) {$w$};
\node[vertex] (r) at (2.0,2.2) {};
\node[vertex] (v) at (2.0,0.9) {$v$};
\node[vertex] (d) at (3.0,-0.2) {};
\node[vertex] (s) at (1.2,-1.2) {};

% Edges of Hollom's graph G
\draw (a)--(r);
\draw (a)--(b);
\draw (a)--(w);
\draw (a)--(v);
\draw (b)--(v);
\draw (w)--(v);
\draw (w)--(d);
\draw (w)--(s);
\draw (r)--(v);
\draw (r)--(d);
\draw (v)--(d);
\draw (d)--(s);

% Edge w--r routed completely outside the graph
\draw (w) -- (-1.8,-0.2) -- (-1.8,3.0) -- (2.0,3.0) -- (r);

% Added paths v x_i y_i
\node[vertex] (x1) at (4.4,2.0) {$x_1$};
\node[vertex] (y1) at (5.7,2.0) {$y_1$};

\node[vertex] (x2) at (4.4,1.2) {$x_2$};
\node[vertex] (y2) at (5.7,1.2) {$y_2$};

\node at (4.95,0.45) {$\vdots$};

\node[vertex] (xk) at (4.4,-0.4) {$x_k$};
\node[vertex] (yk) at (5.7,-0.4) {$y_k$};

\draw (v)--(x1)--(y1);
\draw (v)--(x2)--(y2);
\draw (v)--(xk)--(yk);

\node at (1.2,-1.8) {$G$};

\end{tikzpicture}

\caption{An infinite family of graphs $G_k$ such that $\chi_g(G_k)<\chi_{cg}(G_k)$.}
\label{fig:hollom-family}
\end{figure}

\section{Corona products of trees}
\label{sec:trees}

It is known that every forest $F$ satisfies $\chi_g(F)\leq 4$ \citep{faigle}.
In this section, we study the connected coloring game on corona products of two trees.
For $v\in V(T_1)$, we denote by $T_2^v$ the copy of $T_2$ attached to $v$ in $T_1\circ T_2$.

In Section~\ref{sec:preliminaries}, we recalled the modified and expanded coloring games of \cite{dllrt-2015}, which will be used repeatedly. The MCG is a useful tool for proving upper bounds, since a move of Bob outside a fixed local game can be interpreted as a pass. Similarly, the ECG is useful for proving lower bounds. Indeed, a move of Alice outside a fixed induced subtree can be represented locally either by a pass or by the addition of a colored leaf when it is adjacent to the subtree.

\subsection{Bounds via modified and expanded coloring games}

We first prove an auxiliary lemma that we will use several times later on.

\begin{lemma}
\label{lem:mcg-transfer}
Let $T_1$ and $T_2$ be trees, and let $k\geq 1$. If Alice has a winning strategy in the $k$-MCG on an uncolored $T_2$, then $\chi_{cg}(T_1\circ T_2)\leq k+1$.
\end{lemma}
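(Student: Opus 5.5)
The plan is to give Alice an explicit strategy for the connected $(k+1)$-coloring game on $T_1\circ T_2$. Her moves on the central tree are driven by connectivity alone. In each copy $T_2^v$ she follows her winning strategy for the $k$-MCG on $T_2$, and moves made elsewhere are read as passes.

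\textbf{Alice's strategy.} Alice opens on an arbitrary vertex of the central copy of $T_1$. By the observations in Section~\ref{sec:preliminaries}, no vertex of $T_2^v$ can then be colored before $v$. On each of her turns she applies the first applicable rule:
\begin{enumerate}
\item If Bob's last move was in some copy $T_2^u$ that still has uncolored vertices, she answers in $T_2^u$ according to her $k$-MCG strategy for that copy.
\item Otherwise, if some uncolored vertex of $T_1$ is playable, she colors one.
\item Otherwise, she makes the next move prescribed by the $k$-MCG strategy in some unfinished copy $T_2^u$.
\end{enumerate}

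\textbf{Central vertices are always colorable.} The colored part of $T_1$ is always a subtree, since the game starts in $T_1$ and central vertices can only be reached through $T_1$. So a playable uncolored central vertex $v$ has exactly one colored neighbor in $T_1$. Its copy $T_2^v$ is still entirely uncolored, so $v$ has at most one colored neighbor in total. Since $k\geq1$, the $k+1\geq2$ colors always leave a color for $v$, for Alice or for Bob.

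\textbf{Each copy plays a $k$-MCG.} Fix $v$ with $c(v)$ already assigned. Every vertex of $T_2^v$ is then playable, and $c(v)$ is forbidden on all of them. Their only neighbors outside the copy is $v$ itself. Hence the play inside $T_2^v$ is an ordinary coloring game on $T_2$ with the $k$ colors different from $c(v)$.

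The key step is to check that the sequence of local moves in $T_2^v$ is a legal run of the $k$-MCG, in which Bob starts, Bob may pass, and Alice may not pass. Every maximal stretch of global play with no move in $T_2^v$ is interpreted as Bob passing. Consequences:
\begin{itemize}
\item If Alice is the first to enter $T_2^v$ (rule 3), this corresponds to Bob passing first.
\item Two consecutive Alice moves in $T_2^v$ with nothing between them likewise correspond to an intervening Bob pass.
\item Each Bob move in $T_2^v$ is immediately answered by Alice in $T_2^v$ (rule 1), unless the copy is finished.
\end{itemize}
Thus Alice never "passes" locally in a way the MCG forbids, since any Bob move in the copy is followed by her local reply. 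Her strategy in each copy is therefore a legitimate MCG strategy against this Bob.

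Because it is winning, no vertex of $T_2^v$ ever becomes uncolorable, and Alice's prescribed moves are always legal. Winning the MCG means Alice completes the coloring against every Bob behaviour, so in particular no intermediate position has an uncolorable vertex.

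\textbf{Conclusion.} There are no other vertices than central and copy vertices, and $T_1\circ T_2$ is connected, so some uncolored vertex is playable as long as one remains. No vertex ever becomes uncolorable, so the game ends with a full coloring, giving $\chi_{cg}(T_1\circ T_2)\leq k+1$.

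The main obstacle I expect is the bookkeeping in the reduction to the MCG. One must argue carefully that interleaving many copies, plus moves in $T_1$, never produces a local history the MCG forbids, namely Alice failing to respond after a Bob move, or Alice moving when MCG requires Bob. One must also confirm that a strategy defined for the MCG applies to every such interleaving at every intermediate position. The connectivity constraint itself is harmless here.
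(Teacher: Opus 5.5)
Your proposal is correct and follows essentially the same argument as the paper. Alice opens in $T_1$, answers every Bob move in a copy $T_2^v$ with her $k$-MCG strategy on the $k$ colors other than $c(v)$, and otherwise extends the central subtree; once $T_1$ is exhausted she plays in an unfinished copy, and all intervening non-local play is read as a Bob pass.
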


\begin{proof}
Let $C$ denote a set of $k+1$ colors. Alice starts the game by coloring an arbitrary vertex of the central tree $T_1$.

Since the colored vertices of $T_1$ induce a connected subtree after every move of the game, every uncolored vertex $u\in V(T_1)$ that is adjacent to the colored subtree has exactly one colored neighbor in $T_1$. Moreover, while $u$ remains uncolored, no vertex of the copy $T_2^u$ can be colored because of the connectivity condition of the game. Thus every move of Alice on $T_1$ is legal.

Further, let $v\in V(T_1)$ be a fixed colored vertex. Clearly, the color $c(v)$ is not available at any vertex of $T_2^v$, and therefore the game in this copy is played with the remaining $k$ colors. Moreover, once $v$ has been colored, there are no further connectivity restrictions inside the copy $T_2^v$, since every vertex of $T_2^v$ is adjacent to $v$, which has already been colored.

Alice uses an independent winning $k$-MCG strategy in every copy $T_2^v$. If, after Bob's move in a copy $T_2^v$, there still exists an uncolored vertex in this copy, Alice responds in the same copy according to her winning $k$-MCG strategy. If Bob colors the last uncolored vertex of that copy, Alice needs no local response. Whenever it is Alice's move and no response in a copy is needed, she makes a move in the central tree $T_1$, if possible. Otherwise, if all vertices of $T_1$ are already colored, she chooses any copy $T_2^v$ containing an uncolored vertex and makes a move according to her $k$-MCG strategy in this copy, interpreting Bob's preceding local move as a pass. The same interpretation applies if Alice is the first player to enter a completely uncolored copy $T_2^v$.

Therefore, when we consider the moves in any fixed copy in their local order, they form a legal play of the $k$-MCG. Indeed, Bob is the first local player whenever he is the first to enter the copy. If Alice enters the copy first, Bob's first local move is interpreted as a pass. Likewise, every later unpaired move of Alice in that copy is interpreted as being preceded by a pass of Bob. Thus Alice's winning $k$-MCG strategy on $T_2$ implies that every attached copy can be completely colored using the described strategy. Since the coloring of the central tree can always be extended whenever it still contains an uncolored vertex, Alice is able to color all vertices of $T_1\circ T_2$ when $k+1$ colors are available.
\end{proof}
By Lemma~6 of \cite{dllrt-2015}, Alice has a winning strategy in the $4$-MCG on every uncolored forest. Thus, Lemma~\ref{lem:mcg-transfer} immediately gives the following universal upper bound.

\begin{theorem}
\label{thm:universal-five}
Let $T_1$ and $T_2$ be trees. Then $\chi_{cg}(T_1\circ T_2)\leq 5$.
\end{theorem}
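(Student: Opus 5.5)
This follows directly from Lemma~\ref{lem:mcg-transfer} with $k=4$. The only hypothesis to verify is that Alice has a winning strategy in the $4$-MCG on the uncolored tree $T_2$. This is Lemma~6 of \cite{dllrt-2015}: Alice wins the $4$-MCG on every initially uncolored forest. Every tree is a forest, so in particular she wins on $T_2$.

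With this in hand, Lemma~\ref{lem:mcg-transfer} with $k=4$ gives $\chi_{cg}(T_1\circ T_2)\le 4+1=5$. The strategy behind this is the one built in that lemma. Alice colors the central tree $T_1$ greedily; this is always legal, because an uncolored playable central vertex has exactly one colored neighbor in $T_1$ and its attached copy is still untouched. In each copy $T_2^v$ she runs an independent $4$-MCG strategy on the four colors other than $c(v)$. Whenever Alice moves in a copy without Bob having just moved there, Bob's intervening moves elsewhere are read locally as passes.

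The edge cases have already been handled inside the lemma's proof. These are the cases where $T_1$ or $T_2$ is a single vertex, and the cases where Alice must be the first to enter a copy. There is no real obstacle; the only point to check is that citing Lemma~6 of \cite{dllrt-2015} matches the hypothesis of Lemma~\ref{lem:mcg-transfer}. The MCG there starts with Bob, allows Bob to pass, and is played on an uncolored tree, which is exactly the setting required.
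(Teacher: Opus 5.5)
Your proposal is correct and is exactly the paper's argument: the paper obtains Theorem~\ref{thm:universal-five} immediately by combining Lemma~6 of \cite{dllrt-2015} (Alice wins the $4$-MCG on every uncolored forest) with Lemma~\ref{lem:mcg-transfer} for $k=4$. Your summary of the transferred strategy also matches the proof of that lemma: greedy play on $T_1$, independent local $4$-MCG strategies on the palettes $C\setminus\{c(v)\}$, and Bob's moves elsewhere read locally as passes.
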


For lower bounds the expanded coloring game will be used.

\begin{lemma}
\label{lem:ecg-transfer}
Let $T_1$ and $T_2$ be trees with $|V(T_1)|\geq 2$, and let $H$ be a connected subtree of $T_2$. If Bob has a winning strategy in the $k$-ECG on an uncolored $H$, then Bob also wins the connected coloring game on $T_1\circ T_2$ with $k+1$ colors.
\end{lemma}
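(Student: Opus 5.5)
Bob's plan is to force Alice to be the first player to enter some untouched copy $T_2^v$ whose central vertex $v$ is already colored. From that moment he plays the local $k$-ECG winning strategy on the copy $H^v$ of $H$ inside $T_2^v$. The palette there is the $k$ colors other than $c(v)$, so this is a $k$-coloring game.

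\emph{Step 1: reaching the local game.} First I check that the game inside $T_2^v$ is a $k$-ECG on $H^v$ with Alice moving first. Once $v$ is colored, connectivity imposes no restriction inside $T_2^v$. Every vertex of $T_2^v$ sees $c(v)$, so effectively $k$ colors are in play. Any move of Alice outside $H^v$ is seen locally as follows:
\begin{itemize}
\item a move elsewhere in the corona, or in $T_2^v$ but not adjacent to $H^v$, is a pass;
\item a move at a vertex of $T_2^v - V(H^v)$ adjacent to $H^v$ is the addition of a colored leaf.
\end{itemize}
Since $H$ is a connected subtree of the tree $T_2$, each vertex outside $H^v$ has at most one neighbour in $H^v$, so this really is a leaf addition and the colored part stays a forest. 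Bob's own moves never have to leave $H^v$ while uncolored vertices remain there. If his ECG strategy has already produced an uncolorable vertex, he has won. Hence Bob's $k$-ECG win on $H$ transfers, provided Alice moves first in $H^v$, or at least in $T_2^v$.

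\emph{Step 2: global parity forcing.} Bob pairs moves. When Alice plays in the central tree $T_1$ and an uncolored playable vertex of $T_1$ remains, Bob plays in $T_1$. When Alice plays in a copy $T_2^u$ that she entered, Bob answers there according to that copy's ECG strategy, or arbitrarily in the same copy if $u$ is not the target. The aim is that Bob is never forced to open a fresh copy first.

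There are two sources of trouble.
\begin{enumerate}
\item Parity of $|V(T_1)|$. The fix: once all of $T_1$ is colored, every untouched copy still requires a first mover. Bob wants it to be Alice's turn at the moment no entered copy offers Bob a reply. So I plan Bob's responses within copies so that he can always answer inside the copy Alice just played in. If a copy is full when Alice finishes it, Bob must move elsewhere. To avoid this, Bob chooses his target copy $T_2^v$ to be the first copy Alice enters. In non-target copies Bob can simply mirror: he answers in the same copy whenever possible and otherwise makes a move in $T_1$.
\item Alice starting inside a copy before its central vertex is colored. This is handled by Bob answering in that component and then treating it like the earlier propositions. Alternatively, Bob makes it the target directly: $v$ uncolored only adds the constraint that $v$ later takes a color.
\end{enumerate}

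\emph{Main obstacle.} The hardest step is guaranteeing that Alice, not Bob, is the first to enter some copy containing $H$. With $|V(T_1)|\ge 2$, I would argue as follows. If Bob is ever forced to open a copy $T_2^u$, he can instead color a vertex of $T_1$ if one is playable. Once $T_1$ is complete, an untouched copy must exist: the copy at the last central vertex colored is untouched, and Bob ensures that he himself colored that last vertex. Bob can arrange this by his option to delay. Concretely, when only one uncolored vertex of $T_1$ remains, Bob takes it if Alice moved elsewhere, and otherwise answers inside copies. If Alice colors the last central vertex herself, Bob replies somewhere, and I expect a counting argument on remaining moves to close this gap. That parity bookkeeping is where I expect the difficulty; I would first try to show that Bob can always waste a move in an already-entered copy, or take the last central vertex, so that the first move into some untouched copy falls to Alice.
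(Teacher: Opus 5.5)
Your Step~1 is essentially sound. The gap is in Step~2, and it comes from the proviso at the end of Step~1 that the transfer works only ``provided Alice moves first in $H^v$, or at least in $T_2^v$''.

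That proviso is unnecessary. The $k$-ECG lets Alice pass, so Bob's winning strategy already covers every play in which Bob makes the first local move: Alice's first ECG action is then a pass. The paper uses exactly this. If Alice opens at $u\in V(T_1)$, Bob immediately starts his ECG strategy on $H^u$. He interprets her move at $u$ as the initial pass, and all her later moves outside $H^u$ as passes or leaf additions, just as in your Step~1. No global parity control is needed.

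Moreover, the parity goal you set, forcing Alice to be first to enter some untouched copy, is not achievable in general. Take $T_1=x_1x_2x_3$ with $|V(T_2)|$ even, and let Alice open at $x_2$. Let $p$ be the number of uncolored central vertices plus the number of uncolored vertices in already entered copies. A move within this pool lowers $p$ by one. If Bob opens a fresh copy, $p$ rises by the odd number $|V(T_2)|-1$. So every move flips the parity of $p$. Since $p=2$ at Bob's first turn, $p$ is odd, hence positive, at each of Alice's turns. Unless some vertex is already uncolorable, Alice is therefore never the first to enter a copy, and the strategy you describe never reaches its target game.

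Your treatment of Alice starting inside a copy $T_2^u$ while $u$ is uncolored also fails. Making $T_2^u$ the target ``directly'' does not work, for three reasons:
\begin{itemize}
\item before $u$ is colored, play in $T_2^u$ is restricted by connectivity, so Bob's ECG moves need not be adjacent to the colored set;
\item the local palette $C\setminus\{c(u)\}$ is not yet determined;
\item $H^u$ may no longer be uncolored.
\end{itemize}
The paper's fix is as follows. Bob colors $u$ with a color different from Alice's, then uses a neighbor $v$ of $u$ in $T_1$; this is exactly where $|V(T_1)|\geq 2$ is needed. If Alice next colors $v$, that move serves as the initial pass of an ECG on the uncolored $H^v$. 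Otherwise Bob colors $v$ himself. This is legal, since $u$ is its only colored neighbor and $T_2^v$ is untouched. Alice's next move is then the first ECG action on $H^v$.
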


\begin{proof}
Let $C$ denote a set of $k+1$ colors. Whenever a central vertex $w\in V(T_1)$ is colored, we consider the game on $H^w$ as being played with the color set $C\setminus\{c(w)\}$. Since every vertex of $H^w$ is adjacent to $w$, the color $c(w)$ is not available at any vertex of $H^w$. Thus the presence of $w$ imposes no additional restriction on the game played with the remaining $k$ colors on $H^w$.

Assume first that Alice starts the game at a vertex $u\in V(T_1)$. Bob considers the copy $H^u\subseteq T_2^u$ and interprets Alice's first move at $u$ as a pass in the $k$-ECG. He then uses his winning ECG strategy on $H^u$. If Alice colors a vertex of $H^u$, this is interpreted as her ordinary ECG move. If she colors a vertex of $T_2^u-H^u$ adjacent to $H^u$, Bob interprets that move as the addition of a colored leaf. Since $T_2$ is a tree and $H$ is connected, such a vertex is adjacent to exactly one vertex of $H^u$. Moreover, its color lies in $C\setminus\{c(u)\}$ since it is a neighbor of $u$. Every other move of Alice outside $H^u$ is interpreted as a pass. Clearly, Bob's moves in $H^u$ satisfy the connectivity condition, since every vertex of $H^u$ is adjacent to the already colored vertex $u$. This ensures the legality of the simulated ECG.

Next assume that Alice makes the first move of the game at a vertex of an attached copy $T_2^u$. Bob responds by coloring the vertex $u$ with a color different from the color used by Alice in her first move. Let $v$ be any neighbor of $u$ in $T_1$. Such a vertex exists because $|V(T_1)|\geq 2$. If Alice colors $v$ on her next move, then $T_2^v$ is still completely uncolored and it is Bob's turn. In this case Bob uses $H^v$ as his target and interprets Alice's move at $v$ as the initial pass of the ECG.

Otherwise, if Alice does not color $v$ in her second move, Bob colors $v$ on his next move. This move is legal since the copy $T_2^v$ is still completely uncolored. Indeed, no vertex of this copy can be colored before $v$ because of the connectivity condition of the game. Moreover, among the vertices of $T_1$, the only colored neighbor of $v$ is $u$. Since $T_1$ is a tree and the colored vertices induce a connected subgraph, no other neighbor of $v$ can have been colored before $v$. Therefore, only the color $c(u)$ is forbidden at $v$. It is then Alice's turn, and her next move is interpreted as the first action of the ECG on $H^v$. It is either a move in $H^v$, the addition of a colored leaf when she plays at a vertex of $T_2^v-H^v$ adjacent to $H^v$, or a pass otherwise.

In all considered cases Bob obtains a faithful simulation of his winning $k$-ECG strategy on a copy of $H$ with the $k$ colors different from the color of its central vertex. Hence, at some point in the game, the simulated ECG produces an uncolored vertex of $H^w$ at which none of the colors distinct from $c(w)$ is available. The remaining color $c(w)$ is not available either, since every vertex of $H^w$ is adjacent to $w$. Thus the obtained vertex is uncolorable in $T_1\circ T_2$, and Bob wins the connected coloring game played with $k+1$ colors.
\end{proof}
\cite[Lemmas~9 and~10]{dllrt-2015} showed that Bob has a winning strategy in the $2$-ECG on an uncolored $P_5$ and on the tree $P_4^+$ obtained from $P_4$ by attaching one additional leaf to an internal vertex. This implies the following consequence that will be useful in the continuation.

\begin{corollary}
\label{cor:nonstar-not-p4}
Let $T_1$ and $T_2$ be trees with $|V(T_1)|\geq 2$ and $|V(T_2)|\geq 2$. If $T_2$ is neither a star nor $P_4$, then $\chi_{cg}(T_1\circ T_2)\geq 4$.
\end{corollary}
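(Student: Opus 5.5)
The plan is to combine Lemma~\ref{lem:ecg-transfer} with the $2$-ECG results of \cite[Lemmas~9 and~10]{dllrt-2015}. If we apply Lemma~\ref{lem:ecg-transfer} with $k=2$, it reduces the statement to a purely structural claim. Every tree $T_2$ with at least two vertices that is neither a star nor $P_4$ contains, as a connected subtree, either $P_5$ or $P_4^+$. Once such an $H$ is found, Bob has a winning strategy in the $2$-ECG on the uncolored $H$. Since $|V(T_1)|\geq 2$, Lemma~\ref{lem:ecg-transfer} then shows that Bob wins the connected coloring game on $T_1\circ T_2$ with $3$ colors.

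For the structural step I would split according to the diameter of $T_2$.
\begin{itemize}
\item Diameter at most $2$: $T_2$ is a star ($K_2$ is $K_{1,1}$), so this case is excluded.
\item Diameter at least $4$: a longest path contains a subpath on five vertices, which is a connected subtree isomorphic to $P_5$.
\item Diameter exactly $3$: $T_2$ is a double star, i.e.\ a caterpillar $\operatorname{cat}(k_1,k_2)$ with spine $v_1v_2$ and $k_1,k_2\geq 1$. Since $T_2\neq P_4$, we may assume $k_1\geq 2$. Take two leaves adjacent to $v_1$, the vertex $v_2$, and one leaf adjacent to $v_2$. These induce a connected subtree that is $P_4$ with an extra leaf at an internal vertex, i.e.\ isomorphic to $P_4^+$.
\end{itemize}

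It remains to handle the palette-size subtlety. The connected game is not known to be monotone in the number of colors, so Bob winning with $3$ colors does not by itself give $\chi_{cg}\geq 4$ unless we also rule out $1$ and $2$ colors. Both are immediate. Because $|V(T_2)|\geq 2$, the graph $T_1\circ T_2$ contains a triangle formed by $v$ and an edge of $T_2^v$, so Alice cannot win with fewer than three colors. Hence the least winning number of colors is at least $4$.

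The main obstacle is the structural case analysis, specifically making sure the diameter-$3$ case produces exactly $P_4^+$ and that no other tree slips through. The rest is a direct application of earlier results.
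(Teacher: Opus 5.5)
Your proposal is correct and follows the paper's proof step for step. The triangle rules out fewer than three colors, and Lemma~\ref{lem:ecg-transfer} with $k=2$ is applied to $P_5$ when $\operatorname{diam}(T_2)\geq 4$ and to $P_4^+$ inside the double star when $\operatorname{diam}(T_2)=3$. One small slip: in the diameter-$3$ case the chosen vertex set must also contain the center $v_1$, giving five vertices in total, because the four vertices you list do not form a connected subtree without it.
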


\begin{proof}
First note that $T_2$ contains an edge, and therefore $T_1\circ T_2$ contains a triangle. Hence two colors do not suffice. It remains to show that Bob wins the connected coloring game on $T_1\circ T_2$ with three colors.

If $\operatorname{diam}(T_2)\geq 4$, then $T_2$ contains $P_5$ as a connected subtree, and the result follows from Lemma~\ref{lem:ecg-transfer} and Bob's winning $2$-ECG strategy on $P_5$.

If $\operatorname{diam}(T_2)=3$, then $T_2$ is a double star. Its two central vertices are adjacent and each has at least one leaf neighbor. Since $T_2\neq P_4$, at least one of the two central vertices has degree at least $3$. Hence, by taking one leaf adjacent to each central vertex and one additional leaf adjacent to the central vertex of degree at least $3$, we obtain $P_4^+$ as a connected subtree of $T_2$. Again, Lemma~\ref{lem:ecg-transfer} and Bob's winning $2$-ECG strategy on $P_4^+$ yield the result.

Finally, every tree with at least two vertices and diameter at most $2$ is a star, so there are no further cases to consider.
\end{proof}

\subsection{The corona products of trees with connected game chromatic number three}

In this subsection we characterize corona products of trees with the connected game chromatic number equal to three.

\begin{lemma}
\label{lem:stars}
Let $T$ be a tree and let $n\geq 1$. Then $\chi_{cg}(T\circ K_{1,n})=3$.
\end{lemma}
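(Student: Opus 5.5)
The lower bound is immediate. Since $n\geq 1$, the star $K_{1,n}$ contains an edge, so every copy together with its central vertex contains a triangle, and hence $\chi_{cg}(T\circ K_{1,n})\geq 3$. For the upper bound I will use Lemma~\ref{lem:mcg-transfer} with $k=2$. It therefore suffices to show that Alice wins the $2$-MCG on an uncolored star $K_{1,n}$. Recall that in this game Bob starts and may pass.

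I would verify the $2$-MCG on $K_{1,n}$ directly. Let $z$ be the center and $\ell_1,\dots,\ell_n$ the leaves, with colors $\{1,2\}$.

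\emph{Case 1: Bob's first action colors $z$.} All leaves then get the other color, and they can never be blocked.

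\emph{Case 2: Bob colors a leaf, say with color $1$.} Alice immediately colors $z$ with $2$. This is legal because only one leaf is colored. Every remaining leaf then takes color $1$.

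\emph{Case 3: Bob passes.} Alice colors $z$ herself, and the position is again safe.

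In every case, the only danger is that leaves receive two distinct colors before $z$ is colored. Alice prevents this by coloring $z$ on her first move within the star. Hence Alice wins the $2$-MCG on $K_{1,n}$, and Lemma~\ref{lem:mcg-transfer} gives $\chi_{cg}(T\circ K_{1,n})\leq 3$.

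One point needs checking: the degenerate case $|V(T)|=1$. Lemma~\ref{lem:mcg-transfer} is stated for arbitrary trees, so it covers this case as well. Its proof starts on the central tree and interprets Alice's unpaired local moves as following Bob's passes, which is consistent here.

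The main obstacle is confirming that the interleaving in Lemma~\ref{lem:mcg-transfer} handles a copy that Alice must enter first. The MCG formulation already accounts for this, since a pass by Bob is allowed before Alice's first local move. So no separate parity argument on $T$ is needed, in contrast with Theorem~\ref{thm:odd-even-transfer}.
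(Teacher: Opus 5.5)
Your proof is correct, but it is organized differently from the paper's. The paper does not invoke Lemma~\ref{lem:mcg-transfer} here. It writes out a direct three-color strategy for the connected game on $T\circ K_{1,n}$:
\begin{itemize}
\item Alice grows the colored subtree of $T$.
\item She responds immediately only when Bob is the first to enter a star and colors a leaf. She then colors the star's center with the unique color different from $c(v)$ and Bob's color.
\item Once $T$ is exhausted, she colors the center herself whenever she has to open an untouched star.
\end{itemize}

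You instead verify that Alice wins the $2$-MCG on $K_{1,n}$ through your three cases: Bob colors the center, Bob colors a leaf, or Bob passes. The transfer lemma then handles the interleaving with play on $T$, and it covers copies that Alice must open via the pass interpretation. The local idea is the same in both proofs: at most one leaf is ever colored before the center. So your argument is essentially the paper's strategy factored through the general lemma.

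Your route is shorter and parallels how Proposition~\ref{prop:reduced-four} is derived. It also makes transparent why no parity condition on $|V(T)|$ appears, in contrast with Lemma~\ref{lem:p4}. There, Alice loses the $2$-MCG on $P_4$: Bob passes, and after Alice's first move he colors the vertex at distance two with the other color. The paper's direct argument is self-contained and shows that Alice only needs to react to Bob's first move in an unopened star, rather than answering every local move.
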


\begin{proof}
The lower bound follows from the fact that $T\circ K_{1,n}$ contains a triangle.

To prove the upper bound, assume that the connected coloring game is played with three colors. Alice starts by coloring an arbitrary vertex of $T$. Her only immediate-response rule is the following. If Bob is the first player to enter a copy $K_{1,n}^v$ and colors a leaf while the center of that star is still uncolored, Alice responds by coloring the center with the unique color different from $c(v)$ and the color used by Bob in his previous move. Clearly, this move satisfies the connectivity requirement and is legal.

In all remaining cases, Alice colors an uncolored vertex of $T$ adjacent to the colored subtree, provided such a vertex still exists. Notice that such a vertex has exactly one colored neighbor in $T$, while its own attached copy is still completely uncolored by the connectivity condition. Therefore, it always has an available color.

Otherwise, if the central tree is already completely colored, Alice has no prescribed response in the central tree. She may then play in any attached copy that is not yet completely colored. Whenever Alice is the first player to enter such a copy, she colors the center of the star with any color different from the color of its central vertex in $T$.

Once the center of an attached star is colored, every uncolored leaf has a unique available color, namely the color different from the colors of the central vertex $v$ and the center of the star. Consequently, every such leaf is always colorable. Moreover, by Alice's immediate-response rule, a copy in which a leaf is colored while the center is still uncolored can occur only immediately after Bob's move, and Alice colors the center at once.

Thus, using the described strategy, Alice can always make a legal move until all vertices of $T\circ K_{1,n}$ are colored. Hence $\chi_{cg}(T\circ K_{1,n})\leq 3$, and therefore $\chi_{cg}(T\circ K_{1,n})=3$.
\end{proof}
The path $P_4$ is the unique nonstar second factor that can still yield the value $3$ if the first factor contains at least two vertices, according to Corollary~\ref{cor:nonstar-not-p4}.

\begin{lemma}
\label{lem:p4}
Let $T_1$ be a tree. Then $\chi_{cg}(T_1\circ P_4)=3$ if and only if $|V(T_1)|$ is odd.
\end{lemma}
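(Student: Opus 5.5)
The lower bound $\chi_{cg}(T_1\circ P_4)\geq 3$ is immediate, since $T_1\circ P_4$ contains a triangle. So the lemma reduces to two claims: if $|V(T_1)|$ is odd, Alice wins the connected game with three colors; if $|V(T_1)|$ is even, Bob wins it. Throughout, let $P_4=abcd$.

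\emph{Odd case.} I would apply Theorem~\ref{thm:odd-even-transfer} with $G=T_1$, $H=P_4$ and $k=3$; note that $|V(P_4)|=4$ is even. Two hypotheses need checking.
\begin{enumerate}
\item Alice wins the connected $3$-coloring game on $T_1$. In a tree, every playable vertex has exactly one colored neighbor, so any number of colors $k\geq 2$ suffices. The case $|V(T_1)|=1$ is trivial.
\item Alice wins the Bob-start ordinary $2$-coloring game on $P_4$. By symmetry Bob's first move is at $a$ or at $b$, say with color $1$.
\begin{itemize}
\item If Bob colors $a$, Alice colors $c$ with $1$. Then $b$ and $d$ are nonadjacent and can both receive $2$.
\item If Bob colors $b$, Alice colors $d$ with $1$. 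Then $a$ and $c$ are nonadjacent and can both receive $2$.
\end{itemize}
\end{enumerate}
The transfer theorem then gives the upper bound $3$. This case should be routine.

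\emph{Even case.} I would mimic the lower-bound argument of Proposition~\ref{prop:path-Qn}. The key local fact is that Bob wins the Alice-start ordinary $2$-coloring game on $P_4$. If Alice colors $a$ with $1$, Bob colors $c$ with $2$, and then $b$ is blocked. If Alice colors $b$ with $1$, Bob colors $d$ with $2$, and then $c$ is blocked. Inside a copy $P_4^u$ whose center $u$ is colored, the game is exactly this ordinary $2$-coloring game on the colors different from $c(u)$. Hence it suffices for Bob to force Alice to be the first player to enter some untouched copy whose center is already colored.

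Bob's pairing strategy is as follows.
\begin{itemize}
\item When Alice plays in the unfinished central tree, Bob extends the colored subtree of $T_1$. This is always legal, since each playable central vertex has one colored neighbor and an untouched copy.
\item When Alice plays in a copy, Bob answers in the same copy.
\end{itemize}
If Bob cannot answer legally in a copy while uncolored vertices remain there, he argues that some vertex is already uncolorable, so he has won. Before $u$ is colored, a vertex of $P_4^u$ has at most two colored neighbors and three colors are available, so it is always colorable. Hence Bob can always respond in that phase, and the same parity argument as in Proposition~\ref{prop:path-Qn} applies.

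Given this strategy, the parity argument runs in four steps.
\begin{enumerate}
\item If Alice starts inside a copy $P_4^i$, that copy is connected of even order. Either Alice colors $x_i$ first, or Bob finishes the copy and Alice is then forced to color $x_i$. In both cases Alice is the first player on $T_1$.
\item Moves on $T_1$ then alternate with Alice first. Since $|V(T_1)|$ is even, Bob colors the last central vertex, and its copy is untouched.
\item Every previously entered copy has an even number of uncolored vertices, so Bob keeps answering there until it is finished or he wins.
\item Alice is therefore eventually forced to open an untouched copy whose center is colored, and Bob wins by the local strategy above.
\end{enumerate}

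The main obstacle is the legality and parity bookkeeping when Alice opens the game inside a copy. The worry is that Bob's forced local replies might get stuck without producing an uncolorable vertex, or might break parity. I would handle this exactly as in Proposition~\ref{prop:path-Qn}, relying on the fact that $P_4$ is connected of even order and that its maximum degree is small relative to three colors.
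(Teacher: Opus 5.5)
Your proof is correct and uses the same two local facts as the paper: Alice answers Bob's move in a copy by coloring the vertex at distance two with the same color, and Bob answers Alice's opening move in a copy by coloring the vertex at distance two with the third color. Both proofs combine these with a parity count on $T_1$. The differences are only in packaging: you get the odd case by invoking Theorem~\ref{thm:odd-even-transfer} with $k=3$ and the Bob-start $2$-coloring game on $P_4$ rather than redoing the parity argument, and in the even case you use the pairing of Proposition~\ref{prop:path-Qn} (central with central, copy with same copy) where the paper lets Bob answer anywhere in $V(T_1)\cup V(P_4^u)$.
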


\begin{proof}
Since $T_1\circ P_4$ contains a triangle, at least three colors are necessary.

Suppose first that $|V(T_1)|$ is odd. Alice makes the first move of the game at an arbitrary vertex of $T_1$. Whenever Bob plays in $T_1$ and there still exists an uncolored vertex of $T_1$, Alice colors an uncolored vertex of $T_1$ adjacent to the colored subtree. Whenever Bob plays in an attached copy $P_4^v$, Alice responds by coloring a vertex at distance two from Bob's colored vertex in that copy with the same color.

The latter response is always legal and possible. Indeed, when Bob makes the first move in $P_4^v$, Alice can color a vertex at distance two from his vertex. The two remaining uncolored vertices of the copy are again at distance two, so the same response is available after Bob's second move in that copy. Note that vertices at distance two are nonadjacent. Moreover, every color used in the copy is different from $c(v)$.

In this way every Bob's move in an attached copy is paired with an immediate Alice's answer in the same copy. The sequence of moves restricted to $T_1$ thus alternates, with Alice playing first. Since $|V(T_1)|$ is odd, Alice makes the last move in $T_1$. Consequently, Bob starts the game in every attached copy, and by the above described pairing strategy every copy can be completely colored.

It remains to consider the case when $|V(T_1)|$ is even. Notice that if $v\in V(T_1)$ is colored, while $P_4^v$ is completely uncolored, and Alice has to make the first move in $P_4^v$, then Bob has a strategy for an immediate win. Indeed, if Alice colors $x\in V(P_4^v)$, Bob plays a vertex $z$ at distance two from $x$ with the third color, different from $c(v)$ and $c(x)$. If $y$ is the vertex between $x$ and $z$, then its neighborhood contains three different colors used on $v,x,z$, and hence $y$ is uncolorable.

If Alice starts the game in $T_1$, Bob responds in $T_1$ whenever possible. Alice cannot be the first player to enter an untouched attached copy without losing by the above observation. Since $|V(T_1)|$ is even, Bob makes the last move in $T_1$, after which Alice is forced to enter an untouched attached copy first, which again implies Bob's win.

Finally, assume that Alice makes the first move of the game in a copy $P_4^u$. Denote by $S$ the subgraph induced by $V(T_1)\cup V(P_4^u)$. Whenever Alice plays in $S$, Bob also plays in $S$. Such a response is always possible unless Bob has already won. Indeed, the graph $S$ is connected, and before it is completely colored, it always contains an uncolored vertex of $S$ adjacent to the colored set. If none of these available boundary vertices has a legal color, then an uncolored vertex has already become uncolorable and Bob has won. If Alice makes the first move in any other attached copy, Bob wins by the strategy described above. Thus, as long as Alice has not lost, all moves are made in $S$ until $S$ is completely colored. Since $|V(S)|=|V(T_1)|+4$ is even and Alice starts the game in $S$, the last move in $S$ is made by Bob. Since $|V(T_1)|\geq 2$, there exists at least one completely uncolored attached copy, in which Alice is forced to play first. Bob again wins with the same strategy.

Thus $\chi_{cg}(T_1\circ P_4)=3$ exactly when $|V(T_1)|$ is odd.
\end{proof}
We need to consider separately the case in which the first factor is $P_1$.

\begin{lemma}
\label{lem:p1-characterization}
Let $T$ be a tree with $|V(T)|\geq 2$. Then $\chi_{cg}(P_1\circ T)=3$ if and only if $T$ is a star or $T=P_4$.
\end{lemma}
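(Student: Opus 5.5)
The lower bound $\chi_{cg}(P_1\circ T)\geq 3$ is immediate, since $|V(T)|\geq 2$ gives an edge of $T$, and this edge together with the central vertex $u$ forms a triangle. For sufficiency we use the earlier lemmas, both of which hold for a one-vertex first factor. If $T=K_{1,n}$, we apply Lemma~\ref{lem:stars} with first factor $P_1$. If $T=P_4$, we apply Lemma~\ref{lem:p4} with $T_1=P_1$, since $|V(P_1)|=1$ is odd. The parity argument there survives: Alice colors $u$ first, she makes the last move in the central tree, and she then answers every move of Bob in $P_4^u$ at distance two with the same color.

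For necessity, assume $T$ is neither a star nor $P_4$. We must show that Bob wins with three colors. Corollary~\ref{cor:nonstar-not-p4} cannot be quoted directly, because Lemma~\ref{lem:ecg-transfer} used $|V(T_1)|\geq 2$ to escape when Alice opens inside a copy. As in that corollary, $T$ contains $P_5$ or $P_4^+$ as a connected subtree $H$. We split according to Alice's first move.

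If Alice starts at $u$, the first part of the proof of Lemma~\ref{lem:ecg-transfer} applies verbatim. Her move at $u$ is read as the initial pass, and Bob runs his winning $2$-ECG strategy on $H$ with the two colors different from $c(u)$. Every move of Alice in $T-H$ adjacent to $H$ is read as adding a colored leaf, and every other move as a pass.

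If Alice starts at a vertex $x\in V(T)$, Bob colors $u$ with a color different from $c(x)$. From then on every vertex of $T$ is playable, so the remaining play is an ordinary $2$-coloring game on $T$ with palette $C\setminus\{c(u)\}$. In this game $x$ is precolored and Alice is to move. Bob wins as soon as some uncolored vertex sees both colors. His basic threat is this: if an uncolored $y$ has a colored neighbor of color $a$ and an uncolored neighbor $z$ that may receive the other color $b$, then coloring $z$ with $b$ kills $y$. Where possible we choose $H$ avoiding $x$, or with $x$ a leaf of $H$. In that case Alice's opening at $x$ is exactly an ECG leaf addition (or a pass) followed by her turn, which is not quite the ECG start with Bob to move. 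So we instead argue directly: a single move of Alice cannot defuse all threats created near $x$ in $P_5$ or $P_4^+$.

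The main obstacle is the remaining finite case analysis, where $x$ is forced to be an internal vertex of every available copy of $P_5$ or $P_4^+$, for instance the middle vertex of $T=P_5$. We first try this: after Alice's reply, locate a vertex $y$ at distance at most two from the colored set that still has two uncolored or differently colored neighbors, and show Bob can create the immediate threat above. Since $T$ has diameter at least $4$, or a branching vertex on a path of length $3$, we expect Bob to have two disjoint threats, of which Alice can block only one. We check this separately for $H=P_5$ and $H=P_4^+$ according to the position of $x$. If this fails in some configuration, we fall back on a longer explicit Bob strategy on these small trees.
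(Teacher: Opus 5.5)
Your sufficiency argument is fine. Your treatment of Alice's opening at $u$ also works: with $T_1=P_1$ every later move of Alice lies in $T^u$, so the first half of the proof of Lemma~\ref{lem:ecg-transfer} runs unchanged with $H\in\{P_5,P_4^+\}$. That is a valid shortcut compared with the paper's explicit strategies for this opening.

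The gap is the opening at $x\in V(T)$, and it is more than an unfinished case analysis: your prescribed reply, Bob coloring $u$, can be a losing move. Once $u$ has a color $\gamma\neq c(x)$, every vertex of $T$ must receive one of the other two colors. The proper $2$-coloring of the tree $T$ extending $c(x)$ is unique. While all colored vertices agree with it, an uncolored vertex with a colored neighbor in $T$ has exactly its correct color available, and that color can never be blocked. So Bob can deviate only at an uncolored vertex with no colored neighbor in $T$. Hence Alice wins as soon as a single move $y$ makes $\{x,y\}$ dominate $T$. Two examples:
\begin{itemize}
\item For $T=P_4^+$ (path $p_1p_2p_3p_4$ with an extra leaf at $p_2$) and $x=p_2$, Alice answers by coloring $p_3$ with the third color.
\item For $T=P_5=t_1t_2t_3t_4t_5$ and $x=t_1$, Alice answers by coloring $t_4$ with the third color.
\end{itemize}
In both cases every remaining vertex is permanently safe. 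These are exactly your base subtrees, and in the second one $x$ is a leaf of $H$. So your claim that ``a single move of Alice cannot defuse all threats'' is false, and so is the expected pair of ``two disjoint threats.'' No longer explicit Bob strategy can repair this, because the position after Bob colors $u$ is already won for Alice. The same failure occurs for every double star when $x$ is a center.

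The missing idea is that Bob must not spend his tempo on $u$. Instead he colors a neighbor $y$ of $x$ in $T$ with a second color. Using $\operatorname{diam}(T)\geq 3$, he chooses $y$ so that some path $abc$ in $T$ has $a\in\{x,y\}$ and $b,c$ uncolored. Now $u$ already sees two colors. If Alice plays in $T$, either she uses the third color herself, or Bob places it at $b$ (at $c$ if she took $b$); in both cases $u$ becomes uncolorable. So Alice is forced to color $u$ with the third color. Bob then colors $c$ with the color different from $c(u)$ and $c(a)$, and $b$ becomes uncolorable. This is the paper's argument for this case.
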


\begin{proof}
We denote by $v$ the unique vertex of $P_1$. The sufficiency follows from Lemmas~\ref{lem:stars} and~\ref{lem:p4}. Therefore, we only need to prove the converse.

Suppose that $T$ is neither a star nor $P_4$. Then $\operatorname{diam}(T)\geq 3$. We are looking for Bob's winning strategy in the connected coloring game played with three colors.

Assume first that Alice starts the game at a vertex $x\in V(T)$. Let $y$ be a neighbor of $x$ such that, after $x$ and $y$ have been colored, there is a path $abc$ in $T$ with $a\in\{x,y\}$ and $b,c$ still uncolored. Such a choice is always possible. If the eccentricity of $x$ is at least $3$, take a shortest path $xybc$ of length at least $3$ beginning at $x$ and set $a=y$. If the eccentricity of $x$ is $2$, take a path $xbc$ of length $2$. In this case $x$ must have a neighbor $y\neq b$: otherwise $x$ would be a leaf whose unique neighbor is adjacent to every other vertex, forcing $\operatorname{diam}(T)\leq 2$, a contradiction. We then set $a=x$. Bob colors $y$ with a color different from $c(x)$.

We claim that Alice must next color $v$. Assume, on the contrary, that she does not. Due to the connectivity requirement of the game, she makes another move in $T$. If she uses the third color, then vertex $v$ already has all three colors in its neighborhood and is therefore uncolorable. Consequently, Alice needs to use one of the two colors $c(x)$ and $c(y)$. If she colors $b$, Bob responds by coloring $c$ with the third color. Notice that such a move is legal, and then $v$ again has all three colors in its neighborhood. If Alice colors any other vertex, then $b$ remains uncolored and Bob responds by coloring $b$ with the third color. Such a move of Bob is legal because, apart from $a$, no neighbor of $b$ can yet be colored without passing through $b$ in the tree. Again $v$ becomes uncolorable. Hence Alice is forced to play $v$ using the third color.

In this case Bob answers by coloring $c$ with the color different from $c(v)$ and $c(a)$. Since $c$ is adjacent to the colored vertex $v$ but not to $a$, this move is legal. Then $b$ becomes uncolorable since it is adjacent to $v$, $a$, and $c$, which received three distinct colors.

It remains to consider the case in which Alice plays $v$ first.

If $\operatorname{diam}(T)\geq 4$, denote by $x_1x_2x_3x_4x_5$ a path in $T$. Bob colors $x_3$ with a color different from $c(v)$. After Alice's next move, at least one of the two endpoints $x_1,x_5$, together with all its neighbors in $T$, remains uncolored. Indeed, the closed neighborhoods of $x_1$ and $x_5$ in $T$ are disjoint, since $d_T(x_1,x_5)=4$, while Alice colors only one vertex. Without loss of generality, assume that $x_1$ and all its neighbors in $T$ remain uncolored. Then Bob colors $x_1$ with the third color, different from $c(v)$ and $c(x_3)$. This move is legal and concludes the game, since $x_2$ has all three colors on its neighbors $v,x_1,x_3$ and therefore becomes uncolorable.

Finally, let $\operatorname{diam}(T)=3$. Then $T$ is a double star. Since $T\neq P_4$, one of its two central vertices, say $y$, has at least two leaf neighbors $b_1,b_2$. Denote by $x$ the other central vertex and by $a$ a leaf adjacent to $x$. Without loss of generality, assume that $c(v)=1$. Then Bob colors $b_1$ with color $2$.

If Alice colors $y$, then $y$ must receive color $3$, and Bob colors $a$ with color $2$, making $x$ uncolorable. If Alice colors $x$, Bob colors $b_2$ with color $3$, making $y$ uncolorable. If Alice colors a leaf adjacent to $x$, then either she uses color $3$, in which case Bob colors $b_2$ with color $3$, or she uses color $2$, in which case Bob colors $x$ with color $3$; in either case $y$ becomes uncolorable. If Alice colors another leaf adjacent to $y$, then color $3$ makes $y$ uncolorable immediately, while after color $2$ Bob colors $x$ with color $3$ and again makes $y$ uncolorable.

We conclude that Bob wins with three colors whenever $T$ is neither a star nor $P_4$.
\end{proof}

Next, we give a complete characterization for corona products of trees whose second factor is nontrivial.

\begin{theorem}
\label{thm:three-characterization}
Let $T_1$ and $T_2$ be trees with $|V(T_2)|\geq 2$. Then $\chi_{cg}(T_1\circ T_2)=3$
if and only if one of the following holds:
\begin{enumerate}
    \item $T_2$ is a star;
    \item $T_2=P_4$ and $|V(T_1)|$ is odd.
\end{enumerate}
\end{theorem}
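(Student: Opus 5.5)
The plan is to assemble the characterization from the four preceding results, splitting on whether $T_1$ is trivial. The lower bound $\chi_{cg}(T_1\circ T_2)\geq 3$ always holds: since $|V(T_2)|\geq 2$, the copy $T_2^v$ contains an edge, and together with $v$ it forms a triangle. So the task is only to decide when three colors suffice.

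For sufficiency, condition 1 is exactly Lemma~\ref{lem:stars}, which holds for every tree $T_1$, including $T_1=P_1$. For condition 2, Lemma~\ref{lem:p4} gives $\chi_{cg}(T_1\circ P_4)=3$ whenever $|V(T_1)|$ is odd. This includes $|V(T_1)|=1$, since the odd-case strategy there never uses $|V(T_1)|\geq 2$.

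For necessity, suppose $\chi_{cg}(T_1\circ T_2)=3$.
\begin{itemize}
\item If $|V(T_1)|=1$, Lemma~\ref{lem:p1-characterization} says $T_2$ is a star or $T_2=P_4$. In the second case $|V(T_1)|=1$ is odd, so one of the two conditions holds.
\item If $|V(T_1)|\geq 2$, Corollary~\ref{cor:nonstar-not-p4} rules out every $T_2$ that is neither a star nor $P_4$, since those give value at least $4$. If $T_2=P_4$, Lemma~\ref{lem:p4} forces $|V(T_1)|$ to be odd.
\end{itemize}

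The main obstacle is bookkeeping rather than mathematics: making sure no edge case escapes the cited lemmas. Two checks are needed. First, the even case of Lemma~\ref{lem:p4} must genuinely use $|V(T_1)|\geq 2$; it does, but $0$ is never a vertex count anyway. Second, $P_2$ and $P_3$ must be counted as stars ($K_{1,1}$ and $K_{1,2}$), so that the trichotomy "star, $P_4$, or neither" covers every tree with at least two vertices. With these checks, the proof is a short combination of the cited results.
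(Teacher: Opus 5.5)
Your proposal is correct and matches the paper's proof. Both get the lower bound from the triangle and sufficiency from Lemmas~\ref{lem:stars} and~\ref{lem:p4}. Both prove necessity by splitting into $|V(T_1)|=1$ (Lemma~\ref{lem:p1-characterization}) and $|V(T_1)|\geq 2$ (Corollary~\ref{cor:nonstar-not-p4} together with Lemma~\ref{lem:p4}). Your extra checks, that $P_2$ and $P_3$ are stars and that the odd case of Lemma~\ref{lem:p4} covers $T_1=P_1$, agree with how the paper itself uses these results.
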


\begin{proof}
Since the graph $T_1\circ T_2$ contains a triangle, it follows that $\chi_{cg}(T_1\circ T_2)\geq 3$.

The two conditions are sufficient by Lemmas~\ref{lem:stars} and~\ref{lem:p4}.

For necessity, assume first that $T_1=P_1$. Lemma~\ref{lem:p1-characterization} implies that the only possibilities are that $T_2$ is a star or $T_2=P_4$, and in the latter case $|V(T_1)|=1$ is odd.

Next, suppose that $|V(T_1)|\geq 2$. If $T_2$ is neither a star nor $P_4$, then Corollary~\ref{cor:nonstar-not-p4} implies that $\chi_{cg}(T_1\circ T_2)\geq 4$. If $T_2=P_4$, Lemma~\ref{lem:p4} states that the value is $3$ exactly when $|V(T_1)|$ is odd. This concludes the proof.
\end{proof}

\begin{figure}[ht]
\centering
\begin{tikzpicture}[
    scale=0.9,
    vertex/.style={circle,draw,fill=white,inner sep=1.3pt},
    every node/.style={font=\small}
]

% ------------------------------------------------
% (a) T_1 \circ K_{1,n}
% ------------------------------------------------

\node[draw,ellipse,minimum width=2.7cm,minimum height=1.15cm]
    (T1a) at (0,0) {$T_1$};

% left star
\node[vertex] (s1c) at (-1.45,2.0) {};
\node[vertex] (s11) at (-2.05,2.48) {};
\node[vertex] (s12) at (-1.82,2.90) {};
\node[vertex] (s13) at (-1.08,2.90) {};
\node[vertex] (s14) at (-0.85,2.48) {};

\draw (s1c)--(s11);
\draw (s1c)--(s12);
\draw (s1c)--(s13);
\draw (s1c)--(s14);
\node at (-1.45,2.78) {$\cdots$};

% right star
\node[vertex] (s2c) at (1.45,2.0) {};
\node[vertex] (s21) at (0.85,2.48) {};
\node[vertex] (s22) at (1.08,2.90) {};
\node[vertex] (s23) at (1.82,2.90) {};
\node[vertex] (s24) at (2.05,2.48) {};

\draw (s2c)--(s21);
\draw (s2c)--(s22);
\draw (s2c)--(s23);
\draw (s2c)--(s24);
\node at (1.45,2.78) {$\cdots$};

% attachments
\draw[dashed] (-0.75,0.5) -- (s1c);
\draw[dashed] (0.75,0.5) -- (s2c);

\node at (0,2.10) {$\cdots$};
\node at (0,3.20) {$K_{1,n}$-copies};

\node at (0,-0.95) {(a) $T_1\circ K_{1,n}$};

% ------------------------------------------------
% (b) T_1 \circ P_4
% ------------------------------------------------

\begin{scope}[xshift=6.2cm]

\node[draw,ellipse,minimum width=2.7cm,minimum height=1.15cm]
    (T1b) at (0,0) {$T_1$};

% left P4
\node[vertex] (p11) at (-1.45,1.65) {};
\node[vertex] (p12) at (-1.45,2.10) {};
\node[vertex] (p13) at (-1.45,2.55) {};
\node[vertex] (p14) at (-1.45,3.00) {};
\draw (p11)--(p12)--(p13)--(p14);

% right P4
\node[vertex] (p21) at (1.45,1.65) {};
\node[vertex] (p22) at (1.45,2.10) {};
\node[vertex] (p23) at (1.45,2.55) {};
\node[vertex] (p24) at (1.45,3.00) {};
\draw (p21)--(p22)--(p23)--(p24);

% attachments
\draw[dashed] (-0.75,0.5) -- (p11);
\draw[dashed] (0.75,0.5) -- (p21);

\node at (0,2.10) {$\cdots$};
\node at (0,3.45) {$P_4$-copies};

\node at (0,-0.95) {(b) $T_1\circ P_4$, $|V(T_1)|$ odd};

\end{scope}

\end{tikzpicture}

\caption{The two classes of corona products of trees with connected game chromatic number three. The ellipse represents an arbitrary first factor $T_1$, and only representative attached copies are shown.}
\label{fig:three-characterization}
\end{figure}

\subsection{Sufficient conditions for four colors}

The upper bound obtained in Theorem~\ref{thm:universal-five} can often be improved to four colors. First, we derive a sufficient structural condition based on the reduced coloring game introduced by \cite{dllrt-2015}.

Let $F$ be a forest, and let $E_{>2}(F)$ denote the set of edges with at least one endpoint of degree greater than $2$. Further, let $E_{>>2}(F)$ denote the set of edges with both endpoints of degree greater than $2$.

For a partially colored forest $F$, recall that a trunk $R$ of $F$ is a maximal connected subgraph of $F$ in which every colored vertex is a pendant vertex of $R$. We denote the collection of all trunks of $F$ by $\mathcal R(F)$. The reduced graph $\mathcal R'(F)$ is obtained from the trunk forest by deleting every edge that does not belong to $E_{>2}(\mathcal R(F))$. Thus, the degrees used in the construction of the reduced graph are the degrees in the current trunk forest rather than those in the original partially colored forest. For an uncolored tree $T$, however, we have $\mathcal R(T)=T$, and hence $\mathcal R'(T)$ is obtained simply by deleting every edge whose endpoints both have degree at most $2$ in $T$. We say that a vertex covers a set of edges if it is incident with every edge in that set.

We will use the $3$-reduced coloring game ($3$-RCG) of \cite{dllrt-2015}. This game is played on the current reduced graph, with Bob moving first and being allowed to pass, while Alice may only color vertices of degree at least $3$. Alice wins as soon as all vertices of degree at least $3$ have been colored.

\begin{lemma}
\label{lem:rcg-mcg}
Let $F$ be a forest. If Alice wins the $3$-RCG on $\mathcal R'(F)$, then she also wins the $3$-MCG on $F$.
\end{lemma}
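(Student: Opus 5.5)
The plan is to follow the strategy-transfer scheme of \cite{dllrt-2015}. Alice plays the $3$-MCG on $F$ by simulating the $3$-RCG on the current reduced graph, and she repairs the positions that the reduction discards by purely local arguments. Throughout, Alice keeps one invariant: in every trunk of the current partially colored forest, every uncolored vertex of degree at most $2$ (in $F$) remains colorable with three colors. A vertex of degree at most $2$ can never become uncolorable with three colors. So the only danger in the $3$-MCG is an uncolored vertex of degree at least $3$ whose neighbors carry all three colors. In $\mathcal R'(F)$ such a vertex keeps exactly the edges to its neighbors, because every edge incident with a vertex of degree greater than $2$ lies in $E_{>2}$. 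The proof therefore reduces to showing that the sequence of positions on $\mathcal R'(F)$ induced by the MCG play is a legal play of the $3$-RCG.

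The key steps are as follows.

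\textbf{Translating Bob's moves.} If Bob colors a vertex of $\mathcal R'(F)$ of degree at least $3$, or a neighbor of such a vertex, this is a legal RCG move for Bob. Any other Bob move, for example a vertex inside a long path of degree-$2$ vertices, does not touch $E_{>2}$ and is read as a Bob pass. Bob's MCG passes are likewise read as RCG passes.

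\textbf{Translating Alice's moves.} Alice answers according to her winning RCG strategy. That strategy only tells her to color a vertex of degree at least $3$, so her move is also a legal MCG move. If the RCG is already won, meaning all vertices of degree at least $3$ are colored, Alice colors any uncolored vertex. Every remaining uncolored vertex then has degree at most $2$, so it has an available color, and the game finishes.

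\textbf{Compatibility with recoloring.} I must check that the RCG is played on the current reduced graph. Coloring vertices splits trunks and lowers trunk degrees, so the reduced graph evolves. I would argue that a move in $F$ changes $\mathcal R'(F)$ exactly as the corresponding RCG move, including the deletion of edges whose endpoints drop to trunk degree at most $2$. After such a deletion, the lost vertex has at most two relevant neighbors in its trunk and is harmless.

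The main obstacle is this last compatibility step. The reduced graph is defined by degrees in the trunk forest, not in $F$. A vertex may have degree at least $3$ in $F$ but only degree $2$ in its trunk, because colored neighbors become pendant in separate trunks. I would have to confirm that such a vertex still cannot be killed. Its colored neighbors in other trunks still forbid colors, so the argument must use the fact that a trunk counts colored vertices as pendant members. Concretely, the edges to colored neighbors remain in the trunk, so the trunk degree equals the degree in $F$ restricted to the relevant part.

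I would settle this by invoking the corresponding statement from \cite{dllrt-2015}, where this reduction lemma is essentially proved. I would then verify only the bookkeeping for the first move and for passes, which differ in the modified game.
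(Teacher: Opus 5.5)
Your plan is the paper's: simulate Alice's $3$-RCG strategy on the current reduced graph, carry Bob's moves into the RCG, carry Alice's prescribed moves back to $F$, and finish greedily once every uncolored vertex has degree at most $2$. However, the argument is not complete, and it stops exactly where the proof has content. The step you call the main obstacle, namely whether an uncolored vertex keeps all its $F$-edges in $\mathcal R'(F)$, is needed in three places. First, it justifies your claim that Alice's RCG move is legal in the MCG; being of degree at least $3$ does not by itself give legality. Second, it ensures that a vertex blocked in $F$ is also blocked in the simulated RCG. Third, it turns the RCG termination condition into the statement that every uncolored vertex has degree at most $2$ in $F$. Your remark that the edges to colored neighbors remain in the trunk is in fact the needed fact. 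But you present it as still to be confirmed and then defer to \cite{dllrt-2015}, so the lemma is left unproved.

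The step closes directly from the definition of a trunk, and the scenario you worry about cannot occur. An uncolored vertex $u$ lies in a single trunk, and that trunk contains every edge $uw$ of $F$. Since $F$ is a forest, adding $uw$ keeps the trunk connected, and if $w$ is colored it becomes a pendant vertex, which is allowed. Maximality therefore forces the edge into the trunk. Hence $N_{\mathcal R(F)}(u)=N_F(u)$, and a vertex whose trunk degree is smaller than its $F$-degree must already be colored, where it is irrelevant. So if $d_F(u)\geq 3$, every edge at $u$ lies in $E_{>2}(\mathcal R(F))$ and survives in $\mathcal R'(F)$. Then $u$ has exactly the same colored neighbors there as in $F$, which gives all three facts at once.

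The rest of your apparatus is unnecessary. Since $\mathcal R'(F)$ only deletes edges, every legal move of Bob in $F$ is the same legal move in the RCG, so no move needs to be read as a pass. The RCG is by definition played on the reduced graph of the current position, recomputed after every move, so the compatibility-with-recoloring check is not needed. Finally, your invariant about vertices of degree at most $2$ holds trivially.
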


\begin{proof}
Assume that the $3$-MCG is played on $F$. Alice simulates her winning $3$-RCG strategy on the current reduced graph. After every move, the trunk forest and the corresponding reduced graph are updated.

Every move or pass made by Bob in the MCG corresponds to a legal move or pass in the simulated RCG. Indeed, the reduced graph is obtained from the trunk forest by deleting edges, so a color that is legal for Bob in $F$ is also legal in the reduced graph.

Conversely, every move prescribed for Alice by the RCG strategy is legal in the MCG. Let $u$ be an uncolored vertex of degree at least $3$ in the current reduced graph. Since $u$ has degree at least $3$, every edge incident with $u$ in the trunk forest belongs to $E_{>2}$ and is therefore retained in the reduced graph. Moreover, an uncolored vertex has the same neighbors in the trunk forest as in the original forest $F$. Hence $u$ has the same colored neighbors in the reduced graph as in $F$, and any color that is legal at $u$ in the RCG is also legal at $u$ in the MCG.

When the RCG strategy terminates, every remaining uncolored vertex of $F$ has degree at most $2$. Indeed, if an uncolored vertex $u$ had degree at least $3$ in $F$, then it would have the same degree in the trunk forest. Consequently, every edge incident with $u$ would be retained in the reduced graph, and $u$ would still have degree at least $3$ there, contradicting the termination condition of the RCG.

With three colors available, a vertex of degree at most $2$ can never become uncolorable. Alice may therefore abandon the simulated RCG strategy and complete the $3$-MCG greedily. Hence she wins the $3$-MCG on $F$.
\end{proof}

\begin{proposition}
\label{prop:reduced-four}
Let $T_1$ and $T_2$ be trees. Suppose that every component $R$ of $\mathcal R'(T_2)$ contains a vertex that covers $E_{>>2}(R)$. Then $\chi_{cg}(T_1\circ T_2)\leq 4$.
\end{proposition}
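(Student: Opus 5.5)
The plan is to chain Lemma~\ref{lem:rcg-mcg} with Lemma~\ref{lem:mcg-transfer}. By Lemma~\ref{lem:mcg-transfer} with $k=3$, it suffices to show that Alice wins the $3$-MCG on the uncolored tree $T_2$. By Lemma~\ref{lem:rcg-mcg}, it therefore suffices to show that Alice wins the $3$-RCG on $\mathcal R'(T_2)$. So the whole proof reduces to a single claim: if every component $R$ of $\mathcal R'(T_2)$ has a vertex covering $E_{>>2}(R)$, then Alice wins the $3$-RCG on $\mathcal R'(T_2)$.

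For this claim I would rely on the reduced-game analysis of \cite{dllrt-2015}. To my recollection, it states precisely that Alice wins the $3$-RCG on a reduced forest in which each component has a vertex covering all its edges with both endpoints of degree greater than $2$. If that result is available, it can be cited directly. Otherwise I would prove it by a local strategy in each component $R$, with $u_R$ the covering vertex. The structure of such a component is very restricted. Every edge of $R$ has an endpoint of degree at least $3$. The vertices of degree at least $3$ other than $u_R$ are pairwise nonadjacent, and each is either a neighbour of $u_R$ or separated from the other high-degree vertices by vertices of degree at most $2$. Thus $R$ is essentially a star of high-degree vertices around $u_R$, together with pendant paths of length at most $2$ hanging off high-degree vertices.

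Alice's strategy in $R$ would be as follows. Since Bob moves first in the RCG and may pass, Alice colors $u_R$ as soon as she is to move in $R$, or immediately after Bob first touches $R$. After that, she answers each Bob move in a component by coloring an uncolored high-degree vertex that is most threatened: one that already has two distinctly colored neighbours, or else one adjacent to Bob's last move. Once $u_R$ is colored, the component splits into trunks in which the remaining high-degree vertices are pairwise nonadjacent and each is adjacent to at most one other vertex of degree greater than $2$, namely $u_R$, which is already colored. So every remaining danger comes from vertices of degree at most $2$ next to a high-degree vertex $w$. Bob can colour at most one new neighbour of $w$ per turn, and with $u_R$ colored he needs two further moves near $w$ to exhaust its $3$ colors. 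Alice can always respond at $w$ after the first such move. This keeps every high-degree vertex colorable until it is colored.

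The main obstacle is making this local answering rule consistent with the dynamic updating of trunks and reduced graphs after every move. It must also be consistent with the fact that one Bob move can threaten two high-degree vertices at once, when a degree-$2$ vertex lies between two of them. The covering hypothesis is exactly what should rule out the double threat. Such a middle vertex would make both of those high-degree vertices adjacent to low-degree vertices only. I would then check, using the definition of $E_{>2}$, that in the reduced graph such a path is cut so that the two threats fall into different trunks. Each trunk can then be handled independently with one Alice response per Bob move.
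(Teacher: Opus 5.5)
Your reduction is the paper's proof. For the uncolored tree $T_2$, the components of $\mathcal R'(T_2)$ are its reduced trunks, so the hypothesis is condition~(ii) of Lemma~20 of \cite{dllrt-2015}, which gives Alice a win in the $3$-RCG on $\mathcal R'(T_2)$; then Lemma~\ref{lem:rcg-mcg} and Lemma~\ref{lem:mcg-transfer} with $k=3$ finish. Do not rely on your self-contained fallback, though: if $d_{T_2}(x)=2$ and $x$ joins two vertices $w_1,w_2$ of degree at least $3$, both edges $w_1x,xw_2$ lie in $E_{>2}$ and survive in $\mathcal R'(T_2)$, and the covering hypothesis (vacuous when $E_{>>2}(R)=\emptyset$) does not forbid this, so $R$ need not be star-like around $u_R$ and Bob's move at $x$ really does threaten two branch vertices at once---exactly the situation the cited lemma must handle and your sketch does not.
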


\begin{proof}
Since $T_2$ is initially uncolored, the components of $\mathcal R'(T_2)$ are exactly its reduced trunks, and none of them contains a colored vertex. Therefore the hypothesis is precisely condition~(ii) of Lemma~20 of \cite{dllrt-2015}. Thus, Alice has a winning strategy in the $3$-RCG on $\mathcal R'(T_2)$.

Lemma~\ref{lem:rcg-mcg} therefore yields Alice's win in the $3$-MCG on $T_2$. Applying Lemma~\ref{lem:mcg-transfer}, we obtain $\chi_{cg}(T_1\circ T_2)\leq 4$.
\end{proof}
The above result yields the following simple consequence.

\begin{corollary}
\label{cor:no-adjacent-branch}
Let $T_1$ and $T_2$ be trees. If no two vertices of degree at least $3$ are adjacent in $T_2$, then $\chi_{cg}(T_1\circ T_2)\leq 4$.
If, in addition, $|V(T_1)|\geq2$, $|V(T_2)|\geq2$, and $T_2$ is neither a star nor $P_4$, then $\chi_{cg}(T_1\circ T_2)=4$.
\end{corollary}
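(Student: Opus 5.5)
The plan is to derive the first statement from Proposition~\ref{prop:reduced-four}. We only need to check its hypothesis for a tree $T_2$ in which no two vertices of degree at least $3$ are adjacent.

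Since $T_2$ is initially uncolored, $\mathcal R(T_2)=T_2$. Hence $\mathcal R'(T_2)$ is obtained from $T_2$ by deleting every edge both of whose endpoints have degree at most $2$ in $T_2$. The key observation is that every retained edge has an endpoint of degree at least $3$ in $T_2$. By hypothesis its other endpoint then has degree at most $2$ in $T_2$, and since deleting edges cannot increase degrees, it also has degree at most $2$ in the component $R$ of $\mathcal R'(T_2)$ containing it.

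We must be careful that the degrees in the definition of $E_{>>2}(R)$ are the degrees in $R$, not in $T_2$. A vertex of degree at least $3$ in $R$ has degree at least $3$ in $T_2$. So an edge of $E_{>>2}(R)$ would join two vertices of degree at least $3$ in $T_2$, which is impossible. Therefore $E_{>>2}(R)=\emptyset$ for every component $R$. An empty edge set is covered vacuously by any vertex of $R$, and components are nonempty, so the hypothesis of Proposition~\ref{prop:reduced-four} holds and $\chi_{cg}(T_1\circ T_2)\le 4$.

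For the second statement, assume in addition that $|V(T_1)|\ge 2$, $|V(T_2)|\ge 2$, and that $T_2$ is neither a star nor $P_4$. Corollary~\ref{cor:nonstar-not-p4} then gives $\chi_{cg}(T_1\circ T_2)\ge 4$. Combined with the upper bound, this yields equality.

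The only point that needs care is the distinction between degrees in $T_2$ and degrees in the reduced components when verifying that $E_{>>2}(R)$ is empty. Since degrees only decrease under edge deletion, this step is immediate.
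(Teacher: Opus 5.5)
Your proposal is correct and follows the paper's proof: you show $E_{>>2}(R)=\emptyset$ for every component $R$ of $\mathcal R'(T_2)$ because $R$ is a subgraph of $T_2$ (so degrees can only decrease), which makes the hypothesis of Proposition~\ref{prop:reduced-four} vacuous, and you take the matching lower bound from Corollary~\ref{cor:nonstar-not-p4}. Your remark about degrees in $R$ versus degrees in $T_2$ just spells out what the paper compresses into the phrase that $\mathcal R'(T_2)$ is a subgraph of $T_2$.
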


\begin{proof}
Suppose that no two vertices of degree at least $3$ are adjacent in $T_2$. Then no edge of $T_2$ has both endpoints of degree at least $3$. Since $\mathcal R'(T_2)$ is a subgraph of $T_2$, every component $R$ of $\mathcal R'(T_2)$ satisfies $E_{>>2}(R)=\emptyset$. Therefore, the assumptions of Proposition~\ref{prop:reduced-four} are trivially satisfied, which gives $\chi_{cg}(T_1\circ T_2)\leq4$.

Under the additional assumptions, Corollary~\ref{cor:nonstar-not-p4} gives $\chi_{cg}(T_1\circ T_2)\geq4$. Hence $\chi_{cg}(T_1\circ T_2)=4$.
\end{proof}
The condition in Proposition~\ref{prop:reduced-four} is only sufficient. It remains useful even when $T_2$ contains arbitrarily many vertices of degree at least $3$, provided that, in each component of $\mathcal R'(T_2)$, all edges of $E_{>>2}$ share a common endpoint. Thus, it is considerably more flexible than conditions that merely bound the total number of branching vertices.

\subsection{When five colors are necessary}

We conclude the section by proving that there exists an infinite family of corona products attaining the universal upper bound $5$.

Let $H_d=\operatorname{cat}(0,2,2,2,2,0)$ be the $14$-vertex tree denoted by $T'$ in Figure~5 of \cite{dllrt-2015}. They proved in Theorem~18 that Bob has a winning strategy in the $3$-ECG on $H_d$. Moreover, $H_d$ is a smallest-order tree with game chromatic number $4$.

\begin{theorem}
\label{thm:Hd-five}
Let $T_1$ and $T_2$ be trees with $|V(T_1)|\geq 2$. If $T_2$ contains $H_d=\operatorname{cat}(0,2,2,2,2,0)$ as a connected subtree, then
$\chi_{cg}(T_1\circ T_2)=5$.
\end{theorem}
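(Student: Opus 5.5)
The proof combines two results already established in this section. For the upper bound, Theorem~\ref{thm:universal-five} gives $\chi_{cg}(T_1\circ T_2)\leq 5$ for all trees $T_1,T_2$. So the whole task is the lower bound $\chi_{cg}(T_1\circ T_2)\geq 5$: I need Bob to win the connected coloring game on $T_1\circ T_2$ with four colors.

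For the lower bound I will apply Lemma~\ref{lem:ecg-transfer} with $k=3$ and $H=H_d$. Its hypotheses are that $T_1$ and $T_2$ are trees with $|V(T_1)|\geq 2$, that $H$ is a connected subtree of $T_2$, and that Bob wins the $3$-ECG on an uncolored $H$. The first two hold by the assumptions of the theorem. The third is Theorem~18 of \cite{dllrt-2015}, which says Bob has a winning strategy in the $3$-ECG on $T'=H_d$. The lemma then yields that Bob wins the connected coloring game on $T_1\circ T_2$ with $3+1=4$ colors.

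I still have to rule out Alice winning with fewer colors, because the connected game is not known to be monotone in the number of colors. This is immediate here. With at most three colors Alice loses by Corollary~\ref{cor:nonstar-not-p4}. Its hypotheses hold: $|V(T_1)|\geq 2$, and $T_2$ contains $H_d$, which has diameter $7$ and hence contains $P_5$, so $T_2$ is neither a star nor $P_4$. The corollary's proof shows that Bob wins with three colors, and two colors fail because of a triangle. (With one color Alice also loses, since $T_1\circ T_2$ has edges.) Hence no palette of at most four colors is winning for Alice, so $\chi_{cg}\geq 5$, and together with the upper bound equality holds.

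The main point to check is that $H_d$ as defined here, $\operatorname{cat}(0,2,2,2,2,0)$ with $14$ vertices, really is the tree $T'$ to which \cite{dllrt-2015} apply their $3$-ECG result. In particular, their ECG must be the variant recalled in Section~\ref{sec:preliminaries}, in which Alice may add a colored leaf, because that is what the simulation in Lemma~\ref{lem:ecg-transfer} needs. Once this identification is granted, the proof is a direct combination of the cited results.
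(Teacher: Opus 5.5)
Your proof is correct and matches the paper's. The upper bound comes from Theorem~\ref{thm:universal-five}, four colors are excluded by Lemma~\ref{lem:ecg-transfer} together with Theorem~18 of \cite{dllrt-2015}, and three colors are excluded via the $P_5$ obstruction; the paper applies Lemma~\ref{lem:ecg-transfer} with Lemma~9 of \cite{dllrt-2015} directly, which is exactly what Corollary~\ref{cor:nonstar-not-p4} does in its diameter-$\geq 4$ case. One harmless slip: $H_d=\operatorname{cat}(0,2,2,2,2,0)$ has diameter $5$, not $7$, which still suffices for it to contain $P_5$.
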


\begin{proof}
By Theorem~\ref{thm:universal-five}, we have
$\chi_{cg}(T_1\circ T_2)\leq 5$.

By Theorem~18 of \cite{dllrt-2015}, Bob has a winning strategy in the $3$-ECG on $H_d$. Hence, Lemma~\ref{lem:ecg-transfer} implies that Bob has a winning strategy in the connected coloring game on $T_1\circ T_2$ with four colors.

Since the connected coloring game is not necessarily monotone with respect to the number of available colors, we must also exclude the possibility that Alice wins with three colors. The tree $H_d$ contains $P_5$ as a connected subtree, and by Lemma~9 of \cite{dllrt-2015}, Bob has a winning strategy in the $2$-ECG on $P_5$. Applying Lemma~\ref{lem:ecg-transfer} again, we conclude that Bob also wins the connected coloring game on $T_1\circ T_2$ with three colors. Finally, since $T_1\circ T_2$ contains a triangle, it cannot be properly colored with fewer than three colors.

Thus Alice does not have a winning strategy with fewer than five colors, whereas Theorem~\ref{thm:universal-five} guarantees that she has a winning strategy with five colors. Consequently,
$\chi_{cg}(T_1\circ T_2)=5$.
\end{proof}

The argument above is not specific to $H_d$. More generally, Lemma~\ref{lem:ecg-transfer} shows that Alice loses the connected coloring game with four colors on $T_1\circ T_2$ whenever $T_2$ contains a connected subtree on which Bob wins the $3$-ECG. If such a subtree also contains a connected subtree on which Bob wins the $2$-ECG, then, together with Theorem~\ref{thm:universal-five}, this implies that the connected game chromatic number of $T_1\circ T_2$ is $5$. Thus, any further $3$-ECG obstruction appearing as a connected subtree of $T_2$ can be transferred to the corona product in the same way.

\section{Concluding remarks and open problems}
\label{sec:conclusion}

Corona products provide a natural setting for studying the effect of the connectivity restriction on coloring games. In this paper, we obtained a sharp degree bound for general corona products and exact values for the family $P_m\circ Q_n$, where $Q_n=K_{n,n}-\mathcal M$.

These formulas reveal several unbounded phenomena. Indeed, the choice of the starting player may change the required number of colors by an arbitrarily large amount in either direction. Furthermore, a connected subgraph may have an arbitrarily larger connected game chromatic number than its ambient connected graph. Moreover, the connected game chromatic number of a corona product is not bounded in terms of the connected game chromatic numbers of its factors.

The comparison with the ordinary coloring game also yields interesting results. Namely, we showed that, within corona products, the difference $\chi_g-\chi_{cg}$ can be arbitrarily large. Moreover, our extension of Hollom's example gives infinitely many connected graphs whose game chromatic number is smaller than their connected game chromatic number. However, in all currently known examples with $\chi_{cg}>\chi_g$, the difference is equal to $1$. This motivates the following question:

\begin{question}
Can $\chi_{cg}(G)-\chi_g(G)$ be at least $2$? In particular, does there exist a corona product $G\circ H$ such that $\chi_g(G\circ H)<\chi_{cg}(G\circ H)$?
\end{question}

For corona products of trees, the situation is considerably more structured. We proved the universal bound $\chi_{cg}(T_1\circ T_2)\leq5$ and characterized all corona products with $\chi_{cg}(T_1\circ T_2)=3$ when $|V(T_2)|\geq2$. We also obtained structural conditions guaranteeing the value $4$ and exhibited a broad family of examples attaining the value $5$. The modified and expanded coloring games provide a useful conceptual explanation for this behavior: local strategies in the second factor can be transferred to the corona once the color of the corresponding central vertex is removed from the local palette.

The remaining cases naturally lead to the following problem.

\begin{question}
Characterize all pairs of trees $T_1,T_2$ for which $\chi_{cg}(T_1\circ T_2)=4$ and those for which $\chi_{cg}(T_1\circ T_2)=5$.
\end{question}

Together with the universal upper bound and Theorem~\ref{thm:three-characterization}, such a characterization would complete the determination of $\chi_{cg}(T_1\circ T_2)$ for all corona products of trees with $|V(T_2)|\geq2$.

Our transfer results show that a better understanding of the modified and expanded coloring games on trees would immediately lead to sharper results for corona products.

\begin{question}
Which trees admit a winning strategy for Alice in the $3$-MCG, and which trees admit a winning strategy for Bob in the $3$-ECG?
\end{question}

In particular, the tree $H_d=\operatorname{cat}(0,2,2,2,2,0)$ provides a $3$-ECG obstruction that can be transferred to corona products and forces the value $5$.

\begin{question}
Is there a finite characterization of the minimal connected subtrees whose presence in $T_2$ forces $\chi_{cg}(T_1\circ T_2)=5$ for every tree $T_1$ with $|V(T_1)|\geq2$?
\end{question}

Finally, both the ordinary and the connected coloring games are PSPACE-complete in general \citep{costa-2020,lima-2022}. On the other hand, corona products of trees have a strong structural restriction: five colors always suffice for Alice to win the connected coloring game. This raises the following question.

\begin{question}
What is the computational complexity of the connected coloring game on corona products, and in particular on corona products of trees?
\end{question}

It would also be interesting to study the connected game chromatic number for other standard graph products and to determine which of the phenomena established in this paper persist in those settings.

\backmatter

\section*{Acknowledgements}
%%%%%%%%%%%%%%%%%%%%%%%%%%%%%%%%%%%%%%%%%%%%%%%%%%%%%%%%%%%%%%%%%%%%%

The author acknowledge the financial support from the Slovenian Research Agency (research projects N1-0431).

\end{document}